\newcommand{\GSD}{\Gamma\Rightarrow\Delta}
\def\to{\supset}
\newcommand{\infrule}[1]{\scriptstyle{#1}}
\newcommand{\context}[2]{{#1}\{{#2} \}}
\def\<{\langle}
\def\>{\rangle}
\def\to{\supset}
\def\To{\Rightarrow}
\def\ex{\exists}
\def\fa{\forall}
\def\E{\mathcal{E}}
\def\F{\mathcal{F}}
\def\M{\mathcal{M}}
\def\W{\mathcal{W}}
\def\W{\mathcal{W}}
\def\D{\mathcal{D}}
\def\R{\mathcal{R}}
\def\Se{\mathcal{S}}
\def\V{\mathcal{V}}
\def\tr{\triangleright}
\def\fa{\forall}
\def\ex{\exists}
\begin{document}
\title{Nested Sequents for Quantified Modal Logics\thanks{Tim S. Lyon was supported by the European Research Council (ERC) Consolidator Grant 771779 (DeciGUT).}}
%
%
\author{Tim S. Lyon\inst{1}\orcidID{0000-0003-3214-0828}
\and
Eugenio Orlandelli\inst{2}\orcidID{0000-0002-4021-8667} 
}
\authorrunning{T.S. Lyon and E. Orlandelli}
%
\institute{
Institute of Artificial Intelligence, TU Dresden, Dresden, Germany
\email{timothy\_stephen.lyon@tu-dresden.de}
\and
Department of the Arts, University of Bologna, Bologna, Italy\\
\email{eugenio.orlandelli@unibo.it}
}
\maketitle              
\begin{abstract}
This paper studies nested sequents for  quantified modal logics. In particular, it considers extensions of the propositional modal logics definable by the axioms \textbf{D}, \textbf{T}, \textbf{B}, \textbf{4}, and \textbf{5} with varying, increasing, decreasing, and constant domains. Each calculus is proved to have good structural properties: weakening and contraction are height-preserving admissible and cut is (syntactically) admissible. Each calculus is shown to be equivalent to the corresponding axiomatic system and, thus, to be sound and complete. Finally, it is argued that the calculi are internal---i.e., each sequent has a formula interpretation---whenever  the existence predicate is expressible in the language.
\keywords{Cut elimination \and Nested sequent 
\and Quantified modal logic.}
\end{abstract}
\section{Introduction}
Generalisations of Gentzen-style sequent calculi have proven useful for developing cut-free and analytic proof systems for many  propositional non-classical logics, including modal and intermediate ones. Among these generalisations are {\em display calculi} \cite{B82}, {\em hypersequents} \cite{A96}, {\em labelled calculi} \cite{S94,V00}, and {\em nested sequents} \cite{B92,K94}. They often allow one to give constructive proofs of important meta-theoretical properties such as decidability~\cite{B09}, interpolation~\cite{FK15}, and automatic countermodel extraction~\cite{L21}. These  systems generalise the structural level of Gentzen-style calculi in different ways in order to express wider classes of  logics. In the case of  propositional modal logics they can express the structure of various relational models. In particular, nested sequents encode tree-like relational models and labelled calculi encode graph-like models. 
 In contrast to  other formalisms (e.g. labelled sequents) nested sequents have the advantage of being internal calculi: each nested sequent has a formula interpretation, and thus, such expressions are not a major departure from the modal language.

Things become more difficult when we add the quantifiers. As it is well known \cite{C02,FM98}, in quantified modal logics (QMLs) we have {\em interaction formulas} such as
$$
\textbf{CBF}:=\; \Box \fa xA\to\fa x\Box A
\qquad \text{and}\qquad
\textbf{BF}:=\;\fa x\Box A\to\Box\fa xA
$$
whose validity depends on the interrelations between the domains of quantification ($\D_w$) of the different worlds ($w$) of the model:  \textbf{CBF} is valid only if domains are {\em increasing}---$w\R v$ implies $\D_w\subseteq \D_v$---and \textbf{BF} is valid only if domains are \emph{decreasing}---$w\R v$ implies $\D_w\supseteq \D_v$. Axiomatically, \textbf{CBF} is derivable from the interaction of the axioms/rules for modalities and those for the classical quantifiers, and  \textbf{BF} is independent from them. However, the situation is radically different for sequent calculi than for axiomatic calculi. The problem is that {\bf BF} becomes derivable when we add standard sequent rules for the quantifiers to a calculus having separated left and right rules for the modalities---i.e., it is derivable in all generalisations of Gentzen-style calculi mentioned above. 

To overcome this issue for nested sequents, we employ a formulation technique motivated by labelled sequent calculi. One way of making {\bf CBF} and {\bf BF} independent of the rules for quantifiers within labelled sequent calculi is to extend the language with {\em domain  atoms} of shape $y\in D(w)$ whose intended meaning is that `$y$ belong to the quantificational domain of the label $w$'~\cite{NP11,V00}. In this way, one can restrict the rules for the quantifiers to the terms belonging to the domain of the label under consideration:
$$
\infer[]{y\in D(w),w:\fa xA,\GSD}{w:A(y/x),y\in D(w),w:\fa xA,\GSD}
\quad
\infer[\infrule{z\text{ fresh}}]{\GSD,w:\fa xA}{z\in D(w),\GSD,w:A(z/x)}
$$
As a consequence, {\bf CBF} and {\bf BF} are derivable only if we extend the basic calculus with rules relating the domains of the distinct labels.

In this paper, we study nested sequent calculi for QMLs with varying, increasing, decreasing, and constant domains. Similar to the use of domain atoms in labelled sequents, we will formulate our nested calculi by  extending  the syntax of  sequents with {\em signatures}---i.e., multisets of terms that restrict the applicability of the rules for the quantifiers at that node of the nested sequent---as was done  in \cite{T11} to define hypersequents for G\"odel-Dummett logic with non-constant domains. In particular, we will use the following rules for the universal quantifier:
$$
\infer[\infrule{ L\forall}]{\context{\Se}{X,y;\fa xA,\GSD}}{\context{\Se}{X,y;A(y
/x),\fa xA,\GSD}}\qquad
\infer[\infrule{ R\fa, \text{ $z$ fresh}}]{\context{\Se}{X;\GSD,\fa xA}}{\context{\Se}{X,z;\GSD,A(z/x)}}$$
and will add signature structural rules for increasing, decreasing, and constant domains (Table \ref{rulesQK}).

 As a consequence, we will be able to define nested calculi that are equivalent to the labelled calculi considered in \cite[Ch.~6]{V00} and \cite[Ch.~12.1]{NP11}. We will show that our nested calculi have good structural properties---all rules are height-preserving invertible, weakening and contraction are height-preserving admissible, and cut is syntactically admissible---and that they characterise the quantified extensions of the propositional modal logics in the cube of normal modalities. One advantage of the present approach is that nested sequents with signatures have a formula interpretation given that the language can express the {\em existence predicate} $\E$. In this paper, we will consider a language with identity so that $\E x$ can be expressed as $\ex y(y=x)$ and it need not be taken as an additional primitive symbol; cf.~\cite{C02}. Thus, our calculi utilise (nested) sequents as expressive as the modal language, showing that our calculi are syntactically economical. 

The rest of the paper is organised as follows: \S  \ref{semantics} sketches the  QMLs considered in the paper, and \S \ref{sec:calculi} introduces the nested calculi for these logics. Then, \S \ref{sec:structural} shows that these calculi have good structural properties distinctive of $\mathsf{G3}$-style calculi, including syntactic cut-elimination, and \S \ref{sec:characterisation} shows that each calculus is sound and compete with respect to its intended semantics. Finally, \S \ref{sec:future} presents some future lines of research.

\section{Quantified Modal Logics}\label{semantics}
{\em -Syntax.} 
Let {\em Rel} be a set containing, for each $n\in\mathbb{N}$, an at most countable set of $n$-ary predicates $R_1^n,\,R_2^n,\dots$, and let {\em Var} be a denumerable set of individual variables. The language $\mathcal{L}$ is defined by the following grammar:
\begin{equation}\tag{$\mathcal{L}$}\label{lan}
A\,::=\, R_i^n(x_1,\dots,x_n)\,|\,x_1=x_2\,|\,\bot\,|\, A\to A\,|\, \fa xA\,|\,\Box A
\end{equation}
\noindent where $x,x_1,\dots,x_n\in${\em Var} and $R_i^n\in${\em Rel}. 
An {\em atomic formula} is a formula of the shape $R_i^n(x_1,\dots,x_n)$ or $x_1=x_2$. We use the following metavariables: $x,y,z$ for variables; $P,Q,R$ for atomic formulas; and $A,B,C$ for formulas. An occurrence of a variable $x$ in a formula is {\em free} if it is not in the scope of $\fa x$; otherwise, it is {\em bound}. A {\em sentence} is a formula without free occurrences of variables. The formulas $\neg A$, $A\wedge B$, \mbox{$A\lor B$}, $\ex xA$, and $\Diamond A$ are defined as expected. We follow the usual conventions for parentheses. 
The \emph{weight} of a formula $|A|$ is defined accordingly: $|R_i^n(x_1,\dots,x_n)| = |x=y| = |\bot| = 0$, $|A \to B| = |A| + |B| + 1$, and $|\fa xA| = |\Box A| = |A| + 1$.
We use $A(y/x)$ to denote the formula obtained from $A$ by replacing each free occurrence of $x$ with an occurrence of $y$, possibly renaming bound variables to avoid capture: if $y\not\equiv x$, then  $(\fa y A)(y/x)\equiv \fa z((A(z/y))(y/x))$, where  $z$ is fresh.

\medskip

\noindent {\em -Semantics.}
A {\em frame} is a triple $\F=\< \W,\,\R,\D\>$, where: 
\begin{itemize}
\item$\W$ is a non-empty set of {\em worlds}; 
\item $\R$ is a binary {\em accessibility relation} defined over $\W$; 
\item$\D$ is a function mapping each $w\in \W$ to a possibly empty set of objects $\D_w$ (the  {\em domain} of $w$); we impose that $\D$ is such that $\D_v\neq \varnothing$ for  some $v\in\W$.
\end{itemize}
 We say that $\F$ has:
\begin{enumerate}
\item {\em increasing domains} if for all $ w,v\in\W$, $w\R v$ implies $D_w\subseteq D_v$;
\item {\em decreasing domains} if for all $ w,v\in\W$, $w\R v$ implies $D_w\supseteq D_v$;
\item {\em constant domains} if for all $ w,v\in\W$, $D_w=D_v$;
\item {\em varying domains} if none of the above conditions hold.
\end{enumerate}

A {\em model} $\M$ is a frame together with a valuation function $\V$ such that for each $w\in W$ and each $R^n$ in {\em Rel}, $\V(w,R_n)\subseteq (D_\W)^n$, where $D_\W=\bigcup_{v\in\W} D_v$. An assignment $\sigma$ is a function mapping each variable to an object in $\D_\W$. We let $\sigma^{ x\tr o}$ be the assignment mapping $x$ to $o\in \D_\W$, which behaves like $\sigma$ for all other variables. Observe that variables are {\em rigid designators} in that their value does not change from one world to another.

The notion of {\em satisfaction} of a formula $A$ at a world $w$ of a model $\M$ under an assignment $\sigma$---to be denoted by $\sigma\Vdash_w^\M A$, possibly omitting $\M$---is defined as follows:\smallskip

\begin{tabular}{lll}
$\sigma\Vdash_w^\M R^n(x_1,\dots,x_n)$&\quad iff\quad\phantom{a}&$\langle \sigma(x_1),\dots,\sigma(x_n)\>\in \V(w,R^n)$\\\noalign{\smallskip}
$\sigma\Vdash_w^\M  x=y$&\quad iff\quad\phantom{a}&$\sigma(x)=\sigma(y)$\\\noalign{\smallskip}
$\sigma\not\Vdash_w^\M \bot$\\\noalign{\smallskip}
$\sigma\Vdash_w^\M A\to B$&\quad iff\quad\phantom{a}&$\sigma\not\Vdash_w^\M A$ or $\sigma\Vdash_w^\M B$\\\noalign{\smallskip}
$\sigma\Vdash_w^\M \fa xA$&\quad iff\quad\phantom{a}& for each $o\in \D_w$, $\sigma^{x\tr o}\Vdash_w^\M A$\\\noalign{\smallskip}
$\sigma\Vdash_w^\M \Box A$&\quad iff\quad\phantom{a}& for each $v\in \W$, $w\R v$ implies $\sigma\Vdash_v^\M A$\\\noalign{\smallskip}
\end{tabular}

\noindent  The notions of {\em truth at a world $w$} ($\Vdash_w^\M A$), {\em truth in a model $\M$} ($\Vdash^\M A$), {\em validity in a frame $\F$} ($\F\Vdash A$), and validity in class $\mathcal{C}$ of frames ($\mathcal{C}\Vdash A$) are defined as usual. 
It is well-known that the formula:
\begin{description}
\item[CBF:=] $\Box \fa xA\to\fa x\Box A$ is valid over frames with increasing domains;
\item[BF:=] $\fa x\Box A\to\Box\fa xA$ is valid over frames with decreasing domains;
\item[UI:=] $\fa xA\to A(y/x)$ is valid over frames with constant domains.
\end{description}

\begin{table}[t]
\begin{center}\caption{ Axioms and corresponding properties}\label{corr}\framebox{
\scalebox{0.81}{\begin{tabular}{l|l|l||l|l|l}
Name&Axiom&Property ($w,v,u\in\W)$&Name&Axiom &Property ($w,v,u\in\W)$\\\noalign{\smallskip}\hline\hline\noalign{\smallskip}
{\bf D}& $\Box A\to\Diamond A$& $\fa w\ex u\in\W(w\R u)$&

\,{\bf 5} & $\Diamond A\to\Box\Diamond A$&$\forall w,v,u(w\R v\wedge w\R u\to v\R u)$
\\\noalign{\smallskip}
{\bf T}&$\Box A\to A$&$\forall w(w\R w)$&\,{\bf CBF}&$\Box\fa xA\to\fa x\Box A$\;&$\forall w,v(w\R v\to \D_w\subseteq \D_v)$\\\noalign{\smallskip}
{\bf B}&$A\to\Box\Diamond A$&$\forall w,v(w\R v\to v\R w)$&\,{\bf BF}&$\fa x\Box A\to\Box\fa xA$&$\forall w,v(w\R v\to \D_w\supseteq \D_v)$\\\noalign{\smallskip}
{\bf 4}&$\Box A\to\Box\Box A$&$\forall w,v,u(w\R v \wedge v\R u\to w\R u)$&\,{\bf UI}&$\fa xA\to A[y/x]$&$\fa w,v(\D_w=\D_v)$\\
\end{tabular}}}
\end{center}
\end{table}

\noindent Over frames with non-constant domains the valid theory of quantification is that of positive free logic  instead of that of classical logic. This means that the axiom {\bf UI} is replaced by the weaker axiom {\bf UI}$^\circ:= \fa y(\fa xA\to A(y/x))$. If we extend the language with an \emph{existence predicate} $\E$---whose satisfaction clause is $\sigma\models_w^\M \E x$ iff $\sigma(x)\in\D_w$---then we have the following weaker form of UI that is valid {\bf UI}$^\E:= \fa xA\wedge \E y \to A(y/x)$. Over the language $\mathcal{L}$ the formula $\E x$ can be defined as $\exists y(y=x)$, but over an identity-free language  the existence predicate has to be taken as an additional primitive symbol. This distinction has an impact on the  calculi introduced in the next section: nested sequents have a formula interpretation when $\E$ is expressible in the language.

\medskip

\noindent {\em -Logics.} A {\em QML} is defined to be the set of all formulas that are valid in some given class of frames. 
In this paper, we consider logics that are defined by imposing combinations of the properties in Table \ref{corr}. We use $\mathsf{Q.L}$ for a generic logic  and we say that a formula is {\em $\mathsf{Q.L}$-valid} if it belong to the logic $\mathsf{Q.L}$. The formulas that are valid over the class of all frames is called $\mathsf{Q.K}$ and it is axiomatised by the axioms and rules given in Table \ref{axioms}. We notice that {\bf UI}$^\E$ is a theorem of $\mathsf{Q.K}$, see \cite[Lem.~2.1(iii)]{C02}. The additional axioms for the logics extending $\mathsf{Q.K}$ are given in Table \ref{corr}. We follow the usual conventions for naming logics---e.g., $\mathsf{Q.S4\oplus CBF}$ is the set of formulas that are valid over all reflexive and transitive frames with increasing domains and it is axiomatised by adding axioms \textbf{T}, \textbf{4}, and \textbf{CBF} to $\mathsf{Q.K}$.  We will not distinguish between a logic and its axiomatisation. This is justified by the following theorem.

\begin{theorem}[\cite{C02}]\label{axcomp}
A formula  is a theorem of $\mathsf{Q.L}$ if and only if it is $\mathsf{Q.L}$-valid.
\end{theorem}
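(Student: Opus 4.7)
The plan is to prove the two directions separately, soundness by a straightforward induction and completeness by a Henkin-style canonical model construction tailored to quantified modal logic.

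For soundness I would proceed by induction on the length of an axiomatic derivation in $\mathsf{Q.L}$. The base case requires checking that each axiom in Table \ref{axioms}, together with each additional correspondence axiom listed in Table \ref{corr} present in $\mathsf{Q.L}$, is valid in the class of frames satisfying the associated property. The propositional tautologies and the $\mathbf{K}$ axiom are routine; the axioms $\mathbf{D},\mathbf{T},\mathbf{B},\mathbf{4},\mathbf{5}$ use the corresponding first-order frame properties in the usual way; the interaction axioms $\mathbf{CBF},\mathbf{BF},\mathbf{UI}$ are handled using the semantic clause for $\fa x$ together with the appropriate inclusion or equality between domains. The inductive step amounts to verifying that modus ponens, universal generalisation, and necessitation preserve validity in any frame class.

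For completeness I would build, for any $\mathsf{Q.L}$-consistent set $\Gamma_0$, a model of $\Gamma_0$ in the intended frame class. Following the standard Henkin construction, I would first enlarge $\mathcal{L}$ with a countable supply of fresh parameters and then extend $\Gamma_0$ to a maximally $\mathsf{Q.L}$-consistent, $\exists$-saturated set of sentences (every $\exists x A$ in the set has a witness parameter, and $\ex y(y=c)$ holds for every witness $c$ occurring in the set). The canonical model $\M^c$ has as worlds all such maximal saturated sets $w$; $w\R^c v$ iff $\{A:\Box A\in w\}\subseteq v$; the domain $\D_w$ consists of the equivalence classes, modulo $=$, of the parameters $c$ such that $\E c \in w$ (equivalently, $\ex y(y=c)\in w$); and the valuation interprets each predicate canonically. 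A standard truth lemma, proved by induction on formula weight, then yields $\sigma\Vdash_w^{\M^c}A$ iff $A\in w$ for the identity assignment on parameters.

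The subtle part is verifying that $\M^c$ lies in the intended class of frames. The frame properties corresponding to $\mathbf{D},\mathbf{T},\mathbf{B},\mathbf{4},\mathbf{5}$ follow from standard canonicity arguments, using the axioms to produce the required witnesses among maximal consistent sets. The domain conditions are the delicate point: for $\mathbf{CBF}$ I would show that if $w\R^c v$ and $\E c\in w$, then $\E c\in v$, using that $\Box\fa x\,\E x$ holds and applying $\mathbf{CBF}$; for $\mathbf{BF}$, dually, I would use that $\fa x\Box\,\E x$ forces $\D_v\subseteq \D_w$; and constancy follows from the combination. In the varying-domain case there is nothing extra to check, but one must take care that the weaker quantifier rules of positive free logic (specifically $\mathbf{UI}^\mathcal{E}$, which Theorem 2.1(iii) of \cite{C02} gives as a theorem) are exactly what is used in the truth-lemma step for $\fa x$, so that saturation guarantees existence of witnesses in the correct local domain.

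The main obstacle is precisely this interaction between modal accessibility and the local domains: the Henkin witnesses must be introduced world-by-world in a way that is compatible with the inclusion or reverse-inclusion of domains forced by $\mathbf{CBF}$ or $\mathbf{BF}$, which is why the argument has to use the existence predicate $\mathcal{E}$ and the positive-free quantifier rules rather than the classical ones. Once this is set up, the remaining verifications are routine, and since the same construction is carried out in detail by Corsi \cite{C02} I would in practice invoke her result rather than redo it in full.
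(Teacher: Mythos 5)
The paper does not actually prove this theorem: it is imported wholesale from Corsi~\cite{C02}, so the ``paper's own proof'' is just the citation, and your closing move of invoking Corsi rather than redoing the construction is precisely what the authors do. Your sketch of how the proof would go is the standard soundness-plus-Henkin route and correctly isolates the genuinely delicate points, namely that the truth lemma for $\fa x$ must run through the free-logic principle $\mathbf{UI}^{\E}$ and that the domain conditions of the canonical frame must be extracted from $\mathbf{CBF}$ and $\mathbf{BF}$.

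Two cautions if you were to carry this out in full rather than defer to \cite{C02}. First, the single ``one-shot'' canonical model (all maximal $\exists$-saturated sets over one fixed parameter expansion) is known to be problematic for quantified modal logics: when you pass from $w$ to an accessible $v$ via $\{A:\Box A\in w\}\subseteq v$, you must argue that a \emph{saturated} such $v$ exists without enlarging the language in a way that breaks the domain inclusions already fixed at $w$; this is exactly why Corsi's paper is a \emph{unified} completeness theorem built on a more careful step-by-step (tree-like) construction rather than the naive canonical model, and your sketch glosses over this. Second, your verification of decreasing domains is slightly off as stated: from $w\R^{c}v$ and $\E c\in v$ one gets $\Diamond\E c\in w$ by the definition of $\R^{c}$, and one then needs the $\mathsf{Q.L}$-theorem $\Diamond\E x\to\E x$ (which is what $\mathbf{BF}$ amounts to in the presence of $\E$), rather than the formula $\fa x\Box\E x$ you appeal to, which points in the increasing-domain direction. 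Neither issue is fatal --- both are repaired in \cite{C02} --- but they are the places where a from-scratch writeup would need real work.
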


\begin{table}[t]
\caption{Axiomatisation of $\mathsf{Q.K}$.}\label{axioms}
\framebox{
\begin{minipage}{0.47\textwidth}
\begin{description}
\item[TAUT.] Propositional tautologies
\item[K.] $\Box (A\to B)\to(\Box A\to \Box B)$
\item[UI$^\circ$.] $\fa y(\fa xA\to A(y/x))$
\item[$\fa$-COMM.] $\fa x\fa yA\to\fa y\fa xA$ 
\item[$\fa$-DIST.] $\fa x(A\to B)\to(\fa xA\to\fa xB)$
\item[$\fa$-VAQ.] $ A\to \fa xA$, if $x$ is not free in $A$
\end{description}
\end{minipage}
\begin{minipage}{0.49\textwidth}
\begin{description}
\item[REF.] $x=x$
\item[REPL.] $x=y\wedge A(x/z)\to A(y/z)$
\item[ND.] $x\neq y\to \Box (x\neq y)$
\item[\phantom{a}]\item[MP.] If $A$ and $A\to B$ are theorem so is $B$
\item[N.] If $A$ is a theorem so is $\Box A$ 
\item[UG.] If $A$ is a theorem so is $\fa x A$\end{description}
\end{minipage}\smallskip
}

\end{table}

\section{Nested Calculi for QML}\label{sec:calculi}

A {\em sequent} is an expression $X;\GSD$ where $X$ is a multiset of variables, called a {\em signature}, and $\Gamma,\,\Delta$ are  multisets of formulas of the language $\mathcal{L}$.  The signature of a sequent is a syntactic counterpart of the existence atoms used in calculi where {\bf UI} is replaced by  {\bf UI}$^\circ$ or {\bf UI}$^\E$, see \cite{MO19}. {\em Nested sequents} are defined as follows:
$$
\Se\;::=\; X;\GSD\;|\; \Se,\,[\Se],\dots,[\Se]
$$ 

\noindent A nested sequent $\Se$ codifies  the  tree of sequents $\mathtt{tr}(\Se)$, as shown in Figure \ref{tree}.
\begin{figure}
\begin{center}\begin{tikzpicture}

\node at (0,-0.1) {$X;\GSD$};
\node at (0,0.5) {$\cdots$};\draw[-latex] (-0.1,0.1) -- (-2,1);
\draw[-latex] (0.1,0.1) -- (2,1);
\node at (-2,1) [above] {$\mathtt{tr}(\Se_1)$};
\node at (2,1) [above] {$\mathtt{tr}(\Se_n)$};
\node at (0,1) [above] {$\cdots$};\end{tikzpicture}\end{center}\caption{The tree of the sequent $X;\GSD,[\Se_1],\dots,[\Se_n]$.}\label{tree}
\end{figure}
 
 \noindent Substitution of free variables are extended to (nested) sequents and to multisets of formulas by applying them component-wise. The formula interpretation of a sequent is defined  as follows:
$$
\texttt{fm}(X;\GSD) \equiv \bigwedge_{x\in X} \E x  \wedge\bigwedge \Gamma\to\bigvee\Delta
$$
where $\E x$ is short for the formula $\ex y(y=x)$ and an empty conjunction (disjunction) is  $\top$ ($\bot$, resp.). To provide a formula reading of nested sequents over the identity-free language we could add $\E$ to the language 
or interpret formulas via their universal closure. In the latter case, for example, the formula interpretation of a sequent would be $\texttt{fm}(X;\GSD)\equiv \forall x\in X(\bigwedge\Gamma\to\bigvee\Delta)$, and it seems our nested calculi would capture the QMLs in \cite{Kripke}.\footnote{We thank the anonymous reviewer who suggested this latter possibility.} Nonetheless, we believe there are independent reasons for studying QMLs over a language containing identity; cf.~\cite{C02,FM98}. The formula interpretation of a nested sequent is defined recursively as:
$$
\texttt{fm}(X;\GSD,[\Se_1],\dots,[\Se_n])\equiv(\bigwedge_{x\in X}\E x\wedge\bigwedge \Gamma\to\bigvee\Delta) \lor \bigvee_{k=1}^{n}\Box \,\texttt{fm}(\Se_k) 
$$

Rules are based on the notion of a {\em hole} $\{\cdot\}$, which is a placeholder for a subtree of (the tree of) a nested sequent and, thus, allows one to apply a rule at an arbitrary node in the tree of a nested sequent. A {\em context} is defined as follows:
$$
\mathcal{C} ::= X;\GSD,\{\cdot\},\dots,\{\cdot\}\ \;|\; \mathcal{C} ,\,[\mathcal{C} ],\dots,[\mathcal{C}]
$$ 
 In other words, a context $\mathcal{C}$ is a nested sequent with $n \geq 0$ hole occurrences, which do not occur inside formulas and must occur within consequent position. We hitherto write contexts as $\Se\{\cdot\}\cdots\{\cdot\}$ indicating each of the holes occurring within the context. The {\em depth} of a hole in a context is defined as the height of the branch from that hole to the root (cf.~\cite{B09}), and we write $Depth(\Se\{\cdot\}) \geq n$ for $n \in \mathbb{N}$ to mean that the depth of the hole in $\mathtt{tr}(\Se\{\cdot\})$ is $n$ or greater.

We define \emph{substitutions} of nested sequents into contexts recursively on the number and depth of holes in a given context: suppose first that our context is of the form $\Se\{\cdot\} \equiv X;\GSD,\{\cdot\},[\Se_{1}], \ldots, [\Se_{n}]$ with a single hole at a depth of $0$ and let $\Se' \equiv Y, \Pi \Rightarrow \Sigma, [\Se_{1}'], \ldots, [\Se_{k}']$ be a nested sequent. Then,
$$
\Se\{\Se'\} \equiv X,Y; \Pi, \GSD, \Sigma, [\Se_{1}], \ldots, [\Se_{n}], [\Se_{1}'], \ldots, [\Se_{k}']
$$
 If our context is of the form $\Se\{\cdot\} \equiv X;\GSD,[\Se_{1}\{\cdot\}], \ldots, [\Se_{n}]$ with a single hole at a depth greater then $0$, then we recursively define $\Se\{\Se'\}$ to be the nested sequent $X;\GSD,[\Se_{1}\{\Se'\}], \ldots, [\Se_{n}]$. This definition extends to a context $\Se\{\cdot\}\cdots\{\cdot\}$ with $n$ holes in the expected way, and for nested sequents $\Se_1, \dots, \Se_n$, we let $\Se\{\Se_1\}\cdots\{\Se_n\}$ denote the nested sequent obtained by replacing, for each $ i\in\{1,\dots,n\}$, the $i$-th hole $\{\cdot\}$ in $\Se\{\cdot\}\cdots\{\cdot\}$ with $\Se_i$. We may also write $\Se\{\Se_{1}\}\{\Se_{i}\}^{n}_{i=2}$ to indicate $\Se\{\Se_{1}\}\cdots\{\Se_{n}\}$ more succinctly. Plugging $\emptyset$ into a hole suggests the removal of the hole; for instance, if $\Se\{\cdot\}\{\cdot\} \equiv x; A \To B, \{\cdot\}, [x,y, B,C \To D, \{\cdot\}]$, then $\Se\{\cdot\}\{\emptyset\} \equiv x; A \To B, \{\cdot\}, [x,y; B,C \To D]$.

The rules of the nested calculi for QMLs are given in Table \ref{rulesQK}. The minimal calculus $\mathsf{NQ.K}$ contains initial sequents, the logical rules, and  the rules for identity (rule {\em Rig} is needed---and is sound---because variables are rigid designators). 
If $\mathsf{Q.L}$ is an extension of $\mathsf{Q.K}$ as discussed in Section \ref{semantics}, then $\mathsf{NQ.L}$ denotes the nested calculus  extending $\mathsf{NQ.K}$ with the rules for the axioms of those logics. Observe that to capture axioms {\bf D}, {\bf CBF}, {\bf BF}, and {\bf UI} we have added structural rules instead of logical ones since the former have a better behaviour.

 In~\cite{B09}, Br\"unnler only considers nested calculi (for propositional modal logics) defined relative to \emph{45-complete sets} of axioms. This restriction is required to ensure that the nested calculi contain all rules required for their completeness. Similarly, in the first-order setting, we only consider nested calculi defined relative to \emph{properly closed} sets of axioms, which is a generalisation of 45-completeness and takes care of the interaction of \textbf{B} with \textbf{CBF} and \textbf{BF} (for example), ensuring the completeness of our nested calculi.

\begin{definition}[Properly Closed] Let $\mathsf{L} \subseteq \{\mathbf{D}, \mathbf{T}, \mathbf{B}, \mathbf{4}, \mathbf{5}, \mathbf{CBF}, \mathbf{BF}, \mathbf{UI}\}$. We define $\mathsf{L}$ to be \emph{properly closed} iff  if all $\mathsf{Q.L}$-frames satisfy $X\in \{\mathbf{4}, \mathbf{5}, \mathbf{CBF}, \mathbf{BF}\}$, then $X \in \mathsf{L}$. We define a nested calculus $\mathsf{NQ.L}$ to be \emph{properly closed} iff (1) $\mathsf{L}$ is properly closed, and (2) $R_{5dom} \in \mathsf{NQ.L}$ iff $\mathbf{5} \in \mathsf{L}$ and $\{\mathbf{CBF},\mathbf{BF}\} \cap \mathsf{L} \neq \emptyset$. 
\end{definition}

\begin{remark}
All nested calculi hitherto considered will be assumed properly closed.
\end{remark}

\begin{table}[t]\begin{center}\caption{Nested rules for QML}\label{rulesQK}
\scalebox{0.79
}{\begin{tabular}{ccc}
\hline\hline\noalign{\medskip}
{\bf Initial Sequents:}& \multicolumn{2}{l}{$\context{\Se}{X;P,\Gamma\To\Delta,P}$ with $P$ atomic}\\\noalign{\medskip}
{\bf Logical Rules:}&\\\noalign{\medskip}
\infer[\infrule L\to]{\context{\mathcal{S}}{X;A\to B,\GSD}}{\context{\mathcal{S}}{X;\GSD,A}&\context{\mathcal{S}}{ X;B,\GSD}}
&
\infer[\infrule R\to]{\context{\mathcal{S}}{X;\GSD,A\to B}}{\context{\mathcal{S}}{X;A,\GSD,B}}
&
\infer[\infrule L\bot]{\mathcal{S}\{X;\bot,\GSD\}}{}\\\noalign{\medskip}
\infer[\infrule{ L\forall}]{\context{\Se}{X,z;\fa xA,\GSD}}{\context{\Se}{X,z;A(z/x),\fa xA,\GSD}}&
\infer[\infrule{ R\fa, \text{ $y$ fresh}}]{\context{\Se}{X;\GSD,\fa xA}}{\context{\Se}{X,y;\GSD,A(y/x)}}\\\noalign{\medskip}
\infer[\infrule L\Box]{\Se\{X;\Box A,\GSD,[Y;\Pi\To\Sigma]\}}{\Se\{X;\Box A,\GSD,[Y;A,\Pi\To\Sigma]\}}&
\multicolumn{1}{c}{\infer[\infrule R\Box]{\Se\{X;\GSD,\Box A\}}{\Se\{X;\GSD,[\emptyset; \, \To A]\}}}\\\noalign{\medskip}
{\bf Identity Rules:}\\\noalign{\medskip}
\infer[\infrule Ref]{\Se\{X;\GSD\}}{\Se\{X;x=x,\GSD\}}&
\multicolumn{2}{c}{\infer[\infrule Repl]{\Se\{X;x=y,P(x/z),\GSD\}}{\Se\{X;P(y/z),x=y,P(x/z),\GSD\}}}
\\\noalign{\medskip}
\infer[\infrule Repl_X]{\Se\{X,x;x=y,\GSD\}}{\Se\{X,x,y;x=y,\GSD\}}
&
\multicolumn{2}{c}{\infer[\infrule Rig]{\Se\{X;x=y,\GSD\}\{Y;\Pi\To\Sigma\}}{\Se\{X;x=y,\GSD\}\{Y;x=y,\Pi\To\Sigma\}}}
\\\noalign{\medskip}\hline\noalign{\smallskip}
\multicolumn{1}{l}{{\bf Rules for Propositional Axioms:} }&
\\\noalign{\medskip}
\infer[\infrule R_D]{\context{\Se}{X;\GSD}}{\context{\Se}{X;\GSD,[\emptyset; \,\To\,]}}&

\infer[\infrule R_B\;\;]{\context{\Se}{X;\GSD,[Y;\Box A,\Pi\To\Sigma]}}{\context{\Se}{X;A,\GSD,[Y;\Box A,\Pi\To\Sigma]}}
&
\infer[\infrule R_T]{\context{\Se}{X;\Box A,\GSD}}{\context{\Se}{X;A,\Box A,\GSD}}\\\noalign{\medskip}
\infer[\infrule R_4]{\context{\Se}{X;\Box A,\GSD,[Y;\Pi\To\Sigma]}}{\context{\Se}{X;\Box A,\GSD,[Y;\Box A,\Pi\To\Sigma]}}
&
\multicolumn{2}{c}{\infer[\infrule R_5,\; Depth(\Se\{\cdot\}\{\emptyset\}) \geq 1]{\context{\Se}{X; \Box A,\GSD}\{Y;\Pi\To\Sigma\}}{\context{\Se}{X;\Box A,\GSD}\{Y;\Box A,\Pi\To\Sigma\}}}
\\\noalign{\medskip}
\multicolumn{1}{l}{{\bf Rules for Domains:} }
\\\noalign{\medskip}
\infer[\infrule R_{cbf}]{\context{\Se}{X,x;\GSD,[Y;\Pi\To\Sigma]}}{\context{\Se}{X,x;\GSD,[Y,x;\Pi\To\Sigma]}}
&
\infer[\infrule R_{bf}]{\context{\Se}{X;\GSD,[Y,x;\Pi\To\Sigma]}}{\context{\Se}{X,x;\GSD,[Y,x;\Pi\To\Sigma]}}
&
\infer[\infrule R_{ui}]{\context{\Se}{X;\GSD}}{\context{\Se}{X,x;\GSD}}
\\\noalign{\medskip}
\multicolumn{3}{c}{\infer[\infrule R_{5dom},\; Depth(\Se\{\emptyset\}\{\cdot\}) \geq 1 \text{ and } Depth(\Se\{\cdot\}\{\emptyset\}) \geq 1]{\context{\Se}{X,x; \GSD}\{Y;\Pi\To\Sigma\}}{\context{\Se}{X,x;\GSD}\{Y,x;\Pi\To\Sigma\}}}

\\\noalign{\medskip}
\\\noalign{\medskip}\hline\hline
\end{tabular}}\end{center}
\end{table}

%

Given a calculus $\mathsf{NQ.L}$, an \emph{$\mathsf{NQ.L}$-derivation} of a nested sequent ${}\Se$ is a tree of nested sequents, whose leaves are initial sequents, whose root is ${}\Se$, and which grows according to the rules of $\mathsf{NQ.L}$. We consider only derivations of \emph{pure sequents}, meaning no variable has both free and bound occurrences and each \emph{eigenvariable} (i.e., a fresh variable participating in an $R\fa$ inference) is distinct. The \emph{height} of an $\mathsf{NQ.L}$-derivation  is the number of nodes of one of its longest branches. We say that $\Se$ is $\mathsf{NQ.L}$-derivable 
if there is an $\mathsf{NQ.L}$-derivation 
of $\Se$ or of an alphabetical variant of $\Se$. We let $\mathsf{NQ.L}\vdash \Se$ denote that $\Se$ is $\mathsf{NQ.L}$-derivable. A rule is said to be \emph{(height-preserving) admissible} in $\mathsf{NQ.L}$, if, whenever its premisses are $\mathsf{NQ.L}$-derivable (with height at most $n$), also its conclusion is {$\mathsf{NQ.L}$-derivable (with height at most $n$). A rule is said to be \emph{(height-preserving) invertible} in $\mathsf{NQ.L}$, if, whenever its conclusion is $\mathsf{NQ.L}$-derivable (with height at most $n$), each premiss is {$\mathsf{NQ.L}$-derivable (with height at most $n$). For each rule displayed in Table \ref{rulesQK}, the formulas explicitly displayed in the conclusion are called \emph{principal}, those explicitly displayed in the premisses are called {\em auxiliary}, and everything else constitutes the \emph{context}. 

\section{Properties and Cut-Elimination}\label{sec:structural}

We now show that our nested calculi satisfy fundamental admissibility and invertibility properties. Ultimately, we will apply these properties in our proof of syntactic cut-elimination.

\begin{lemma}[Generalised Initial Sequents]\label{genax}  $\mathsf{NQ.L}\vdash \Se\{X;A,\GSD,A\}$, for any arbitrary $\mathcal{L}$-formula $A$.
\end{lemma}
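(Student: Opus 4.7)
The plan is a straightforward induction on the weight $|A|$ of the formula $A$, generalising over all contexts $\Se\{\cdot\}$, signatures $X$, and multisets $\Gamma, \Delta$ simultaneously.

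For the base case, $|A| = 0$ splits into three subcases. If $A$ is of the form $R_i^n(x_1,\dots,x_n)$ or $x_1 = x_2$, then $\Se\{X; A, \GSD, A\}$ is an instance of an initial sequent, since both shapes are atomic. If $A \equiv \bot$, then $\Se\{X; \bot, \GSD, \bot\}$ is obtained by a zero-premiss application of $L\bot$ (with $\bot$ harmlessly sitting in the right-hand multiset).

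For the inductive step, I would split on the outermost connective of $A$, using the appropriate right rule followed by the corresponding left (or diamond-style) rule, and then invoking the induction hypothesis on the strictly smaller subformulas. Concretely, if $A \equiv B \to C$, first apply $R{\to}$ to the right occurrence yielding $\Se\{X; B \to C, B, \GSD, C\}$, then apply $L{\to}$ to the left occurrence with premisses $\Se\{X; B, \GSD, C, B\}$ and $\Se\{X; B, C, \GSD, C\}$, each closed by the induction hypothesis on $B$ and $C$ respectively. If $A \equiv \fa x B$, apply $R\fa$ with a fresh eigenvariable $z$ to obtain premiss $\Se\{X, z; \fa x B, \GSD, B(z/x)\}$, then apply $L\fa$ with instantiating variable $z \in X \cup \{z\}$ to obtain $\Se\{X, z; B(z/x), \fa x B, \GSD, B(z/x)\}$, which is closed by the induction hypothesis on $B(z/x)$ (whose weight equals $|B| < |\fa x B|$). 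If $A \equiv \Box B$, apply $R\Box$ to get the premiss $\Se\{X; \Box B, \GSD, [\emptyset; \, \To B]\}$, then apply $L\Box$ to introduce $B$ in the new child node, yielding $\Se\{X; \Box B, \GSD, [\emptyset; B \To B]\}$, which is derivable by the induction hypothesis on $B$ (applied in the extended context $\Se\{X; \Box B, \GSD, [\emptyset; \cdot]\}$).

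There is no real obstacle: the only mild subtleties are (i) that the $L\fa$ rule requires the instantiating variable to belong to the signature of the sequent in which the principal formula sits, which is why one applies $R\fa$ first so that the eigenvariable $z$ is available in the signature when $L\fa$ is then applied; (ii) that the induction must be stated uniformly over all contexts $\Se\{\cdot\}$ so that the recursive call in the $\Box$ case — where the hole effectively moves one level deeper into a freshly created bracketed child — is licensed by the induction hypothesis; and (iii) that the substitution $B(z/x)$ preserves weight, ensuring the induction is well-founded in the $\forall$-case. None of the rules for propositional axioms, domains, or identity need to be invoked.
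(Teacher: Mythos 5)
Your proof is correct and matches the paper's approach exactly: the paper dispatches this lemma with "a standard induction on the weight of $A$," and your case analysis (atoms and $\bot$ as base cases; $R{\to}/L{\to}$, $R\fa$-then-$L\fa$ using the fresh eigenvariable now present in the signature, and $R\Box$-then-$L\Box$ in the deepened context for the inductive step) is precisely that standard induction, with the relevant subtleties correctly identified.
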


\begin{proof} By a standard induction on the weight of $A$.
\qed\end{proof}

\begin{lemma}\label{axid}
The  sequents $
\Se\{\;\To x=x\}$ and $\Se\{x=y,A(x/z)\To A(y/z)\}$ are  $\mathsf{NQ.L}$-derivable. 
\qed \end{lemma}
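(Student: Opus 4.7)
The plan uses the identity rules \emph{Ref}, \emph{Repl}, and \emph{Rig} from Table~\ref{rulesQK}. For the first sequent $\Se\{\;\To x=x\}$, a single upward application of \emph{Ref} (with the active sequent instantiated to $\;\To x=x$) reduces the goal to the initial sequent $\Se\{x=x\To x=x\}$. For the second sequent $\Se\{x=y, A(x/z) \To A(y/z)\}$, I would proceed by induction on the weight $|A|$. The base case, $A$ atomic (a relational atom or an identity), is handled by applying \emph{Repl} upward with $P:=A$, producing the initial sequent $\Se\{A(y/z), x=y, A(x/z) \To A(y/z)\}$; the case $A=\bot$ is immediate from the rule $L\bot$.

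For the inductive step, I would apply the rules for the main connective of $A$ upward and invoke the IH on its immediate subformulas, side-stepping any need for explicit weakening by absorbing extra ambient formulas into the IH's choice of context. For $A=B\to C$, upward applications of $R\to$ and then $L\to$ produce two premises; the premise with $C$ principal in $L\to$, namely $\Se\{x=y, B(y/z), C(x/z) \To C(y/z)\}$, follows from the IH on $C$ with $B(y/z)$ absorbed into the context. For $A=\fa w B$ (after renaming $w$ to avoid $x,y,z$), $R\fa$ adds a fresh eigenvariable $u$ to the signature, $L\fa$ on $\fa w B(x/z)$ instantiates with $u$, and the IH on $B(u/w)$ closes the premise. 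For $A=\Box B$, $R\Box$ spawns a fresh nested box, $L\Box$ brings $B(x/z)$ into the new child node, \emph{Rig} propagates $x=y$ into that node, and the IH on $B$ closes the derivation.

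The main obstacle is the other premise of the $\to$-case, namely $\Se\{x=y, B(y/z) \To B(x/z), C(y/z)\}$, which requires a \emph{reverse-direction} replacement for $B$. I would handle this without cut and without a simultaneous induction via the following device: apply the IH to $B$ with the roles of $x$ and $y$ interchanged, choosing the IH's surrounding context so that it already carries $x=y$ and $x=x$ on the left and $C(y/z)$ on the right of the hole's ambient; this produces $\Se\{y=x, x=y, x=x, B(y/z) \To B(x/z), C(y/z)\}$. A single application of \emph{Repl} with $P(w):=(w=x)$---so that $P(x/w)=x=x$ plays the role of the witness, and $P(y/w)=y=x$ is the copy discharged when the rule is read top-down---yields $\Se\{x=y, x=x, B(y/z) \To B(x/z), C(y/z)\}$, and a final \emph{Ref} eliminates $x=x$. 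In this way, \emph{Repl} and \emph{Ref} internally realise symmetry of identity within one sub-derivation, and together with \emph{Rig} in the $\Box$-case they suffice to carry out the induction without any recourse to cut.
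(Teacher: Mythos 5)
Your proof is correct and follows essentially the same route as the paper's: induction on the weight of $A$, with the $\Box$ case handled by $R\Box$, $L\Box$, $Rig$, and the inductive hypothesis, and the first sequent obtained by one application of $Ref$ to an initial sequent. The paper displays only the $\Box$ case; your treatment of the left premise of $L\to$ --- realising symmetry of identity via $Repl$ with $P(w):=(w=x)$ followed by $Ref$ --- correctly fills in a detail the paper leaves implicit.
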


\begin{proof}
$\Se\{\;\To x=x\}$ is derivable by applying an instance of rule {\em Ref} to the initial sequent $\Se\{\;x=x\To x=x\}$. The case of $\Se\{x=y,A(x/z)\To A(y/z)\}$ is handled by induction on $|A(x/z)|$. We consider only the case where $A(x/z)= \Box B(x/z)$.
$$
\infer[\infrule R\Box]{\Se\{x=y,\Box B(x/z)\To\Box B(y/z)\}}{
\infer[\infrule L\Box]{\Se\{x=y,\Box B(x/z)\To\;,[\,\To B(y/z)]\}}{
\infer[\infrule Rig]{\Se\{x=y,\Box B(x/z)\To\;,[B(x/z)\To B(y/z)\}]}{
\infer[\infrule IH]{\Se\{x=y,\Box B(x/z)\To\;,[x=y,B(x/z)\To B(y/z)]\}}{}}}}
$$
\phantom{a}\qed\end{proof}

\begin{lemma}\label{botr} The following $R\bot$ rule is height-preserving admissible in $\mathsf{NQ.L}$:
$$
\infer[\infrule R\bot ]{\Se\{X;\GSD\}}{\Se\{X;\GSD,\bot\}}
$$
\end{lemma}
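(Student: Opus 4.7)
The plan is to proceed by induction on the height $h$ of the given derivation of $\Se\{X;\GSD,\bot\}$. The crucial observation driving the argument is that $\bot$ is never a \emph{principal} formula on the right in any rule of $\mathsf{NQ.L}$: inspection of Table~\ref{rulesQK} shows that no logical, identity, or structural rule produces a $\bot$ in the succedent of its conclusion as principal, and in the initial-sequent scheme $\Se'\{Y;P,\Pi\To\Sigma,P\}$ the formula $P$ must be atomic, whereas $\bot$ is not atomic per the syntax in \S\ref{semantics}. Hence the occurrence of $\bot$ we wish to delete is always parametric in whatever rule is applied last.

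For the base case ($h = 1$), the derivation consists of a single node that is either an initial sequent or an instance of $L\bot$. In the first case, since the matching $P$ cannot be $\bot$, the targeted $\bot$ lies in the parametric part of the succedent; erasing it yields another instance of the same initial-sequent scheme. In the second case, some node in $\mathtt{tr}(\Se\{X;\GSD,\bot\})$ contains $\bot$ in its antecedent, and this occurrence is untouched by deletion on the right, so the resulting sequent is still an instance of $L\bot$.

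For the inductive step, let $r$ be the rule applied last, with premises of height at most $h-1$. Because $\bot$ is parametric in $r$, each premise carries a corresponding occurrence of $\bot$ in (some copy of) the parent's succedent. Applying the induction hypothesis to each premise erases that $\bot$ while preserving the height bound $h-1$, after which $r$ can be re-applied to the modified premises to derive $\Se\{X;\GSD\}$ in height at most $h$. The side conditions---freshness of eigenvariables in $R\forall$, and the depth requirements for $R_5$ and $R_{5dom}$---are unaffected, since $\bot$ contains no variables and its deletion changes neither the shape of the nesting tree nor the depth of any hole.

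The main obstacle is merely a bookkeeping one, namely verifying exhaustively that no rule in Table~\ref{rulesQK} has $\bot$ as a right-hand principal formula; once this is confirmed, the induction is entirely routine and uniform across the calculi $\mathsf{NQ.L}$.
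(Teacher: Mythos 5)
Your proof is correct and follows essentially the same route as the paper's: a straightforward induction on the height of the derivation, observing that $\bot$ is never principal on the right in any rule (so initial sequents and $L\bot$ instances are preserved, and $R\bot$ permutes above every other rule). Your additional remarks on eigenvariable freshness and depth side conditions are fine but not needed beyond what the paper's brief argument already covers.
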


\begin{proof} By a straightforward induction on the height of the derivation $\mathcal{D}$ of the premiss. The proof is almost trivial as any application of $R\bot$ to an initial sequent of an instance of $L\bot$ gives another initial sequent or instance of $L\bot$, respectively, and $R\bot$ permutes above every other rule of $\mathsf{NQ.L}$.\qed\end{proof}\begin{lemma}[Substitution]\label{subs} The following rule of substitution of free variables is height-preserving admissible in $\mathsf{NQ.L}$:
$$
\infer[\infrule (y/x) ]{\Se(y/x)\{X(y/x);\Gamma(y/x)\To\Delta(y/x)\}}{\Se\{X;\GSD\}}
$$
\end{lemma}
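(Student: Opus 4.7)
The plan is to proceed by induction on the height $n$ of the given $\mathsf{NQ.L}$-derivation $\mathcal{D}$ of $\Se\{X;\GSD\}$, producing in each case a derivation of $\Se(y/x)\{X(y/x);\Gamma(y/x)\To\Delta(y/x)\}$ of height at most $n$. Throughout, I will rely on the purity convention (no variable has both free and bound occurrences, and eigenvariables are pairwise distinct), which guarantees that substitution commutes with quantifier structure without capture.

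In the base case ($n = 1$), $\mathcal{D}$ is a single initial sequent or an instance of $L\bot$. Since substitution $(y/x)$ acts componentwise on signatures, on multisets of formulas, and on the nested tree structure, an initial sequent $\Se\{X;P,\Gamma\To\Delta,P\}$ is mapped to an initial sequent of the same shape (the atomic $P$ becomes $P(y/x)$ on both sides of the arrow), and the case for $L\bot$ is entirely analogous.

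For the inductive step, let $r$ be the last rule applied. For every rule except $R\fa$---that is, for the remaining logical rules, the identity rules $Ref$, $Repl$, $Repl_X$, $Rig$, and all structural rules for the propositional axioms and the domain rules---I apply the IH with $(y/x)$ to each premise (of height at most $n-1$) and then reapply $r$ to the substituted premises. This works because substitution commutes with the rule actions: on contexts, signatures, and non-principal formulas it acts purely componentwise, while on a principal formula of the shape $\fa x' A$ or on a term-substituted instance $A(t/x')$ the familiar identity $B(t/x')(y/x)\equiv B(y/x)(t(y/x)/x')$ applies, with no capture by purity. Side conditions such as $Depth(\Se\{\cdot\}\{\emptyset\})\geq 1$ in $R_5$ and $R_{5dom}$ are preserved verbatim, because $(y/x)$ does not alter the tree shape of a nested sequent.

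The sole delicate case is $r = R\fa$, with conclusion $\Se'\{X'; \Gamma' \To \Delta', \fa x' A\}$ and premise $\Se'\{X',z; \Gamma' \To \Delta', A(z/x')\}$ for some fresh eigenvariable $z$. By purity $z \not\equiv x$. If also $z \not\equiv y$, then the IH applied to the premise yields a derivation of height at most $n-1$ of the expected substituted premise, and a final application of $R\fa$ with eigenvariable $z$ (which remains fresh, since the substitution does not introduce $z$) gives the required conclusion. If instead $z \equiv y$, I first choose a variable $w$ fresh for $\mathcal{D}$, for $x$, and for $y$, apply the IH with the renaming $(w/z)$ to the premise to obtain a height-$\leq(n-1)$ derivation of $\Se'\{X',w; \Gamma' \To \Delta', A(w/x')\}$, and then invoke the IH again with $(y/x)$ on this new derivation, finishing with $R\fa$ at eigenvariable $w$. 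This double use of the IH is legitimate because both invocations target derivations of height strictly less than $n$. The main obstacle is precisely this capture-avoiding renaming in the $R\fa$ case; all other cases amount to routine bookkeeping.
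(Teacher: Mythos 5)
Your proof is correct and follows essentially the same route as the paper: induction on the height of the derivation, with all cases routine except $R\forall$, which is handled by first using the induction hypothesis to rename the eigenvariable to a fresh variable and then applying it again to perform the substitution $(y/x)$ before reapplying $R\forall$. The only cosmetic difference is that you perform the renaming only when the eigenvariable clashes with $y$, whereas the paper renames unconditionally.
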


\begin{proof}
By induction on the height of the derivation $\mathcal{D}$ of the premiss. The only interesting case is when the last step of $\mathcal{D}$ is an instance of $R\fa$:
$$
\infer[\infrule{ R\fa, \text{ $z_2$ fresh}}]{\context{\Se}{X;\GSD,\fa z_1A}}{\context{\Se}{X,z_{2};\GSD,A(z_2/z_1)}}
$$
We transform the derivation of the premiss by applying the inductive hypothesis twice to ensure the freshness condition is preserved: the first time to replace $z_2$ with a fresh variable $z_3$ and then to replace $x$ with $y$. We conclude by applying $R\fa$ with $z_3$ as the {\em eigenvariable}.
\qed\end{proof}

Typically, admissible structural rules operate on either formulas (e.g., see the internal weakening rule IW below) or nesting structure (e.g., see the Merge rule below) in nested calculi. An interesting observation in the first-order setting is that admissible structural rules also act on the signatures occurring in nested sequents. This gives rise to forms of weakening and contraction for terms, which are reminiscent of analogous rules formulated in the context of hypersequents with signatures~\cite{T11}.

\begin{lemma}[Signature Structural Rules]\label{sig-struc-rules} The following rules of signature weakening and signature contraction are height-preserving admissible in $\mathsf{NQ.L}$:
$$\infer[\infrule SW]{\context{\Se}{X,x;\GSD}}{\context{\Se}{X;\GSD}}\qquad
\infer[\infrule SC]{\context{\Se}{X,x;\GSD}}{\context{\Se}{X,x,x;\GSD}}$$
\end{lemma}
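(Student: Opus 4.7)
The plan is to prove both rules simultaneously by induction on the height $n$ of the derivation of the premise. The two statements fit a common pattern: each amounts to applying a multiset operation (add one $x$, or merge two copies of $x$) to a single signature identified by the context $\Se\{\cdot\}$, without touching any other component of the nested sequent.

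The base case ($n=1$) is immediate: initial sequents and instances of $L\bot$ do not depend on the signature, so adding a variable or contracting two copies leaves the axiom in place. For the inductive step, we case on the last rule $r$ of the derivation. For all rules whose action either (a) is confined to a node disjoint from $\Se\{\cdot\}$ or (b) treats the signature at the affected node as context (this covers the propositional and identity rules, $L\fa$, $L\Box$, $R\Box$, and the axiom rules $R_D, R_T, R_B, R_4, R_5$), we simply apply the IH to each premise of $r$ and then re-apply $r$.

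The signature-modifying rules $R_{cbf}, R_{bf}, R_{ui}, R_{5dom}$ need slightly more care: they too change a signature somewhere in the nested sequent, but in a way that commutes with the SW/SC modification. For instance, if $r = R_{ui}$ with premise $\Se\{X,y;\GSD\}$ and conclusion $\Se\{X;\GSD\}$ and we wish to weaken in $x$ at $\Se\{\cdot\}$, the IH applied to the premise yields $\Se\{X,y,x;\GSD\}$ at height at most $n-1$, after which $R_{ui}$ removes $y$ to produce $\Se\{X,x;\GSD\}$ at height at most $n$; the remaining domain rules are analogous, including $R_{5dom}$ whose depth side-conditions are preserved since the tree structure is unchanged. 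When SW/SC and the domain rule act on different signatures in the nested sequent, the rule's premise is unaffected by the modification and the IH applies directly.

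The main obstacle, and the only case requiring a non-trivial tool, is $r = R\fa$ under SW. This rule introduces a fresh eigenvariable $y$ in its premise $\Se\{X,y;\GSD,A(y/z)\}$. If $y$ happens to coincide with the variable $x$ being added, one first renames $y$ to a completely fresh $y'$ throughout the derivation of the premise using the height-preserving substitution lemma (Lemma \ref{subs}); the IH then adds $x$ to the signature, and $R\fa$ is re-applied with $y'$ as the eigenvariable. For SC, no renaming is ever required, since $x$ already occurs in $X,x,x$, so any fresh $y$ is automatically distinct from $x$. With this interaction handled, the remaining cases are routine.
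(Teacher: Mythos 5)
Your proposal is correct and follows essentially the same route as the paper: a height-preserving induction in which SC permutes trivially over every rule, and the only delicate case for SW is an $R\fa$ inference whose eigenvariable clashes with the variable being weakened in, resolved by first renaming the eigenvariable via the height-preserving substitution lemma (Lemma~\ref{subs}), then applying the inductive hypothesis and re-applying $R\fa$. The extra detail you give on the domain rules ($R_{ui}$, $R_{cbf}$, etc.) is accurate but subsumed by the paper's blanket ``standard induction'' treatment.
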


\begin{proof} By a standard induction on the height of the derivation $\mathcal{D}$ of the premiss. Proving height-preserving admissibility of SC is trivial as the rule permutes above all rules of $\mathsf{NQ.L}$. Proving the height-preserving admissibility of SW is also straightforward with the only interesting case arising when $\mathcal{D}$ ends with an instance of $R\fa$ with $x$ as the {\em eigenvariable}. However, this  case is easily managed by applying the height-preserving admissible substitution  $(y/x)$ to ensure the freshness condition for $R\fa$ is satisfied, followed by the inductive hypothesis, and an application of $R\fa$.
\qed\end{proof}

As in the setting of first-order intuitionistic logics with increasing and constant domains (see~\cite{L20}), we find that our structural rules for domains give rise to admissible logical rules generalising the $L\fa$ rule. Such rules (presented in the proposition below) combine the functionality of the associated domain structural rules with the $L\fa$ rule. The $L\fa_{bf}$ and $L\fa_{cbf}$ rules are instances of \emph{reachability rules}~\cite{L21,L22}, which bottom-up operate by searching for terms along edges in a nested sequent used to instantiate universal formulas.

\begin{proposition} The following logical rules for `domain-axioms' and for axiom {\bf D} are admissible in the nested calculi including the appropriate structural rules for domains or $R_D$:
\begin{center}
\scalebox{0.9}{\begin{tabular}{cc c}
\infer[\infrule L\fa_{bf}]{\context{\Se}{X;\fa xA,\GSD,[Y,y;\Pi\To\Sigma]}}{\context{\Se}{X;A(y/x),\fa xA,\GSD,[Y,y;\Pi\To\Sigma]}}

&\quad&

\infer[\infrule L\fa_{ui}]{\context{\Se}{X;\fa xA,\GSD}}{\context{\Se}{X;A(y/x),\fa xA,\GSD}}
\end{tabular}}
\end{center}
\[
\scalebox{0.9}{\infer[\infrule L\fa_{cbf}]{\context{\Se}{X,y;\GSD,[Y;\fa xA,\Pi\To\Sigma]}}{\context{\Se}{X,y;\GSD,[Y;A(y/x),\fa xA,\Pi\To\Sigma]}}\quad
\infer[\infrule L_D]{\Se\{X;\Box A,\GSD\}}{\Se\{X;\Box A,\GSD,[\emptyset;A\To \;]\}}
}
\]
\end{proposition}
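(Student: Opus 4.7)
The overall strategy is to treat each of the four admissibility claims as a short concrete derivation built on top of a given derivation of the premiss, combining height-preserving admissibility of signature weakening $SW$ (Lemma~\ref{sig-struc-rules}) with the standard $L\fa$ or $L\Box$ rule and the corresponding structural rule ($R_{ui}$, $R_{cbf}$, $R_{bf}$, or $R_D$). No induction on derivation height is needed; each case amounts to appending at most three inferences to the given derivation of the premiss.

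For $L\fa_{ui}$, starting from a derivation of $\Se\{X; A(y/x), \fa xA, \GSD\}$, I would first apply $SW$ to obtain a derivation of $\Se\{X, y; A(y/x), \fa xA, \GSD\}$, then apply $L\fa$ (whose applicability requires the instantiating variable to appear in the signature) to derive $\Se\{X, y; \fa xA, \GSD\}$, and finally apply $R_{ui}$ to strip $y$ from the signature, obtaining the conclusion $\Se\{X; \fa xA, \GSD\}$. The rules $L\fa_{cbf}$ and $L\fa_{bf}$ follow the same three-step pattern, differing only in where $SW$ inserts the variable and which structural rule closes the derivation. For $L\fa_{cbf}$, the variable $y$ already occurs in the outer signature $X, y$; one uses $SW$ to insert $y$ into the inner signature as well, applies $L\fa$ at the inner node, and then invokes $R_{cbf}$ to remove the inner copy. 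For $L\fa_{bf}$ the situation is dual: $y$ already occurs in the inner signature $Y, y$; one uses $SW$ to place $y$ in the outer signature, applies $L\fa$ at the outer node (where $\fa xA$ sits), and finishes with $R_{bf}$ to remove the outer copy.

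For $L_D$ no signature weakening is needed. Given a derivation of $\Se\{X; \Box A, \GSD, [\emptyset; A \To\,]\}$, a single application of $L\Box$ (with the outer $\Box A$ as principal) produces $\Se\{X; \Box A, \GSD, [\emptyset; \To\,]\}$, after which $R_D$ deletes the empty nesting and yields the conclusion $\Se\{X; \Box A, \GSD\}$.

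I do not foresee any substantial obstacle. The only things to verify in each case are that, after the $SW$ step, the side conditions on $L\fa$ (that the instantiating variable belongs to the relevant signature) and on the closing structural rule are indeed met; these checks are immediate from inspection of the rule schemata in Table~\ref{rulesQK}. The proposition is therefore essentially a corollary of Lemma~\ref{sig-struc-rules} together with the fact that $L\fa$ and $L\Box$ compose cleanly with the associated domain/seriality structural rules.
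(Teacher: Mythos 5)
Your proposal is correct and follows essentially the same route as the paper: each rule is obtained by appending $SW$, then $L\fa$ (or $L\Box$), then the matching structural rule ($R_{ui}$, $R_{cbf}$, $R_{bf}$, or $R_D$) to the given derivation of the premiss, exactly as in the paper's worked $L\fa_{cbf}$ case. The only difference is that you spell out all four cases where the paper details one and declares the rest similar.
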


\begin{proof} The admissibility of $L\fa_{cbf}$ from $R_{cbf}$ and $SW$ is proven as follows:
$$
\infer[\infrule R_{cbf}]{\Se\{X,y;\GSD,[Y;\fa xA,\Pi\To\Sigma]\}}{
\infer[\infrule L\fa]{\Se\{X,y;\GSD,[Y,y;\fa xA,\Pi\To\Sigma]\}}{
\infer[\infrule SW]{\Se\{X,y;\GSD,[Y,y; A(y/x),\fa xA,\Pi\To\Sigma]\}}{
\Se\{X,y;\GSD,[Y;A(y/x),\fa xA,\Pi\To\Sigma]\}}}}
$$
The cases of $L\fa_{bf}$ and $L\fa_{ui}$ are similar, and the case of $L_D$ follows immediately from $R_D$.
\qed\end{proof}
\begin{lemma}[Weakenings]\label{weakening} The following rules of internal and external weakening are height-preserving admissible in $\mathsf{NQ.L}$:
$$\infer[\infrule IW]{\context{\Se}{X; \Pi,\GSD,\Sigma}}{\context{\Se}{X; \GSD}}\qquad
\infer[\infrule EW]{\context{\Se}{X; \GSD,[Y; \Pi\To\Sigma]}}{\context{\Se}{X; \GSD}}$$
\end{lemma}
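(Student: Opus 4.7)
The plan is to prove the two rules separately, each by induction on the height $h$ of the derivation $\mathcal{D}$ of its premiss. The base case ($h=1$), where the premiss is an initial sequent or an instance of $L\bot$, is immediate: IW only adjoins formulas at some existing node, and EW only adjoins a new sibling nested component, so the principal atomic formula (or $\bot$) remains in place and the conclusion is again an initial sequent or an instance of $L\bot$.

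For the inductive step, one examines the last rule $r$ applied in $\mathcal{D}$. In essentially all cases---including the propositional, identity, signature, modal, and domain rules---the hole at which we wish to weaken corresponds to a unique position in each premiss of $r$. One therefore applies the inductive hypothesis to each premiss at the corresponding position (with the same height bound) and reapplies $r$. This is sound because IW merely adds formulas at a node without altering nesting, while EW adds only a new sibling component at the targeted node; neither manipulation touches the principal formulas or the nesting manipulated by $r$. Cases like $R\Box$ and $R_D$, whose premisses contain an extra child not present in the conclusion, are handled analogously, since the weakening position lies within the surrounding context $\Se$, which persists unchanged in the premiss.

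The main delicacy concerns the freshness condition of $R\fa$. If $r$ is $R\fa$ with eigenvariable $z$ and the weakening introduces $z$ (through $\Pi,\Sigma$ for IW, or through $Y,\Pi,\Sigma$ for EW), then one first renames $z$ to a fresh variable $z'$ throughout the derivation of the premiss via the height-preserving admissible substitution of Lemma \ref{subs}, then invokes the inductive hypothesis, and finally reapplies $R\fa$ with eigenvariable $z'$. For $R_5$ and $R_{5dom}$, by contrast, no real difficulty arises: IW preserves nesting entirely, while EW only appends a new child at the targeted node, leaving the depths of all pre-existing holes unchanged, so the relevant depth conditions continue to hold automatically.
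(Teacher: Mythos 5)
Your proposal is correct and follows essentially the same route as the paper's own proof: induction on the height of the derivation of the premiss, with the only non-trivial case being $R\fa$, resolved by renaming the eigenvariable via the height-preserving admissible substitution rule (Lemma~\ref{subs}) before applying the inductive hypothesis and reapplying $R\fa$. The additional remarks on the base case and on the depth conditions for $R_5$ and $R_{5dom}$ are accurate but do not change the argument.
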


\begin{proof}
By induction on the height of the derivation $\mathcal{D}$ of the premiss. If $\mathcal{D}$ ends with an instance of rule $R\fa$ with $y$ the {\em eigenvariable}, we apply the (height-preserving admissible) substitution rule to replace $y$ with a fresh variable $z$ occurring neither in $\context{\Se}{X; \GSD}$, nor in $\Pi,\Sigma$ (in the IW case) or in $Y,\Pi,\Sigma$ (in the EW case). Then, we apply the inductive hypothesis and an instance of $R\fa$ to conclude $\context{\Se}{X; \Pi,\GSD,\Sigma}$ in the IW case and $\context{\Se}{X; \GSD,[Y; \Pi\To\Sigma]}$ in the EW case.\qed\end{proof}

\begin{lemma}[Necessitation and Merge]\label{merge} The following rules are height-preserving admissible in $\mathsf{N.QL}$:
$$
\infer[\infrule Nec]{\To\,,[\Se]}{\Se}
\qquad
\infer[\infrule Merge]{\Se\{X;\GSD,[Y,Z;\Pi_1,\Pi_2\To\Delta_1,\Delta_2]\}}{\Se\{X;\GSD,[Y;\Pi_1\To\Delta_1],[Z;\Pi_2\To\Delta_2]\}}
$$
\end{lemma}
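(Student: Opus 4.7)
The plan is to handle the two rules separately, treating each by induction on the height of the derivation of the premiss. For \emph{Nec}, the argument is essentially routine: given a derivation $\mathcal{D}$ of $\Se$, I would define a transformation that, at every step of $\mathcal{D}$, wraps the current nested sequent inside a new top-level box $[\cdot]$ preceded by the empty outer sequent $\emptyset;\To$. Since all rules in Table~\ref{rulesQK} are stated via a context $\Se\{\cdot\}$ and therefore act at an arbitrary node, each inference in $\mathcal{D}$ lifts to an inference in the wrapped derivation without changing its height. The only mild care required is at an $R\fa$ step whose eigenvariable $y$ may collide with variables that become \emph{visible} once the outer material is added; this is resolved as in Lemma~\ref{subs} and Lemma~\ref{weakening}, by first using height-preserving substitution to rename $y$ to a fresh variable.

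For \emph{Merge}, I would proceed by induction on the height of the derivation $\mathcal{D}$ of $\Se\{X;\GSD,[Y;\Pi_1\To\Delta_1],[Z;\Pi_2\To\Delta_2]\}$, splitting on the last rule used. If the last rule acts inside one of the two sibling boxes $[Y;\Pi_1\To\Delta_1]$ or $[Z;\Pi_2\To\Delta_2]$, or anywhere else in the context $\Se\{\cdot\}$, the step commutes trivially with merging: we apply the inductive hypothesis to the premiss(es) and then reapply the same rule in the merged sequent. The same is true for the logical rules, identity rules, and the signature/domain structural rules $R_{cbf},R_{bf},R_{ui},R_T,R_4,R_B$, whose schematic forms remain instances of the same rule after fusing the two boxes into $[Y,Z;\Pi_1,\Pi_2\To\Delta_1,\Delta_2]$. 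The critical cases are those in which the active part of the rule straddles the two boxes being merged, namely $L\Box$, $R_4$, $R_B$, $R_{cbf}$, and $R_{bf}$ when the principal formula sits in $\Se\{\cdot\}$ and the auxiliary formula lands in one of the two boxes; in each of these, the merged target box is again a legal target for the rule, so the inductive step goes through. For $R\Box$, the fresh box that is introduced is a \emph{third} sibling, which is entirely orthogonal to the two being merged, so again the step lifts. For $R\fa$, the usual precaution of renaming eigenvariables via Lemma~\ref{subs} prior to applying the inductive hypothesis ensures that freshness is preserved.

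The two technically delicate points, and what I expect to be the main obstacle, are (i) the rules $R_5$ and $R_{5dom}$, which carry the depth side-conditions $Depth(\Se\{\cdot\}\{\emptyset\})\geq 1$ and $Depth(\Se\{\emptyset\}\{\cdot\})\geq 1$, and (ii) the initial sequent cases. For (i), one has to verify that merging two sibling boxes never decreases the depth of any hole appearing elsewhere in the tree below the point where the merge occurs, so the side-conditions of $R_5$ and $R_{5dom}$ on the premiss translate to valid side-conditions on the conclusion; this is true because merging collapses two siblings at the \emph{same} depth into a single sibling at that depth, leaving all strictly deeper nodes in place. For (ii), we must check that if the premiss is an initial sequent $\Se\{X;P,\GSD\To\Delta,P\}$ (or an instance of $L\bot$), then so is the merged sequent: the atomic formula $P$ either stays in place (if it was in $\Se\{\cdot\}$ outside the two boxes) or now appears, together with its dual, inside the enlarged box $[Y,Z;\Pi_1,\Pi_2\To\Delta_1,\Delta_2]$, yielding again an initial sequent (the case of $L\bot$ is analogous).
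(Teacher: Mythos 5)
Your proposal is correct and follows the same route as the paper, which proves Lemma~\ref{merge} by exactly the induction on the height of the derivation of the premiss that you describe (the paper gives no further detail, so your case analysis of the straddling rules, the $Depth$ side-conditions for $R_5$/$R_{5dom}$, and the eigenvariable renaming is a faithful elaboration of its one-line argument). The only point worth flagging is that when the last rule is $R_5$ or $R_{5dom}$ with its two active nodes being precisely the two boxes being merged, you should reapply the rule with both holes placed at the single merged node (which the definition of contexts permits) rather than appeal to contraction, since contraction is only established after this lemma.
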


\begin{proof} By a simple induction on the height of the derivation 
 of the premiss.
\qed\end{proof}

\begin{lemma}[Invertibility]\label{lem:invert} Each rule of $\mathsf{NQ.L}$ is height-preserving invertible.
\end{lemma}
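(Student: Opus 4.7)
The plan is to split the rules of $\mathsf{NQ.L}$ into two groups. The first group consists of those rules whose premise(s) are obtainable from the conclusion by adjoining a formula to an antecedent or succedent, by adding a variable to some signature, or by annexing a fresh bracketed subsequent. For such rules, height-preserving invertibility is immediate from the height-preserving admissibility of $IW$ and $EW$ (Lemma \ref{weakening}) and of $SW$ (Lemma \ref{sig-struc-rules}). A quick inspection of Table \ref{rulesQK} shows that this group covers $L\fa$, $L\Box$, \emph{Ref}, \emph{Repl}, \emph{Rig}, $R_T$, $R_B$, $R_4$, $R_5$ via $IW$; $R_D$ via $EW$; and \emph{Repl}$_X$, $R_{cbf}$, $R_{bf}$, $R_{ui}$, $R_{5dom}$ via $SW$. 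The case of $L\bot$ is vacuous.

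The remaining rules---$L\to$, $R\to$, $R\Box$, and $R\fa$---require the standard $\mathsf{G3}$-style argument by induction on the height $n$ of the $\mathsf{NQ.L}$-derivation $\mathcal{D}$ of the conclusion. In the base case, $\mathcal{D}$ is an initial sequent or an instance of $L\bot$; since the principal formula of the base step must be atomic or $\bot$, it cannot coincide with the compound principal formula $A\to B$, $\Box A$, or $\fa xA$ of the rule being inverted, and thus the desired premise is itself an initial sequent or instance of $L\bot$ with the same atomic witness (and the appropriate signature enlarged by the fresh $y$ in the $R\fa$ case). In the inductive step, I would case split on the last rule $r$ applied in $\mathcal{D}$. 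If $r$ is exactly the rule under inversion acting on the targeted occurrence of the principal formula, then its premise already supplies a derivation of the target sequent of height at most $n-1$. Otherwise, $r$ acts on a distinct occurrence, and the plan is to apply the inductive hypothesis to each premise of $r$ and then reapply $r$, thereby preserving the overall height.

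The main obstacle lies in the $R\fa$ case, where the premise demands a designated fresh variable $y$ in the signature, and when commuting past a rule whose own eigenvariable or displayed variable clashes with $y$---most acutely a nested $R\fa$---variable capture must be avoided. I would pre-empt this by invoking the height-preserving admissible substitution rule (Lemma \ref{subs}) to rename the conflicting variable to a globally fresh one before applying the inductive hypothesis and then reassembling the derivation with $r$. A secondary and milder bookkeeping concern arises in the $R\Box$ case when the last rule in $\mathcal{D}$ is $R_5$ or $R_{5dom}$, since these rules carry depth side-conditions that must survive the commutation; however, because inverting $R\Box$ propagates data only to an already-existing sub-node rather than altering the tree shape of $\mathtt{tr}(\Se\{\cdot\})$, the depth constraints remain satisfied without further adjustment.
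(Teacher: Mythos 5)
Your proposal matches the paper's proof: the paper likewise observes that every rule except $L{\to}$, $R{\to}$, $R\fa$, and $R\Box$ is height-preserving invertible because its premiss arises from its conclusion by the height-preserving admissible weakenings $IW$, $EW$, and $SW$ (Lemmas~\ref{sig-struc-rules} and~\ref{weakening}), and it dispatches the remaining four rules by the standard induction on the height of the derivation. Your added details---renaming clashing eigenvariables via the substitution rule of Lemma~\ref{subs} and checking that the depth side conditions of $R_5$/$R_{5dom}$ survive the commutation---are precisely what the paper leaves implicit under ``standard''; the only slight imprecision is that inverting $R\Box$ does add a new node $[\emptyset;\To A]$ to the tree, but since those side conditions are lower bounds on hole depths they remain satisfied anyway.
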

\begin{proof} The proof is by induction on the height of the derivation. The height-preserving invertibility of all rules but $L{\to}$, $R{\to}$, $R\fa$ and $R\Box$ follows from Lemmas \ref{sig-struc-rules} and \ref{weakening}, and the proof of the remaining cases is standard.\qed\end{proof}

\begin{lemma}[Contraction]\label{contr} The following rules of left and right contraction are height-preserving admissible in $\mathsf{NQ.L}$:
\begin{center}
\begin{tabular}{c @{\hskip 2em} c}
\infer[\infrule CL]{\context{\mathcal{S}}{X;\Gamma,A\To\Delta}}{\context{\mathcal{S}}{X;\Gamma,A,A\To\Delta}}

&

\infer[\infrule CR]{\context{\mathcal{S}}{X;\Gamma\To\Delta,A}}{\context{\mathcal{S}}{X;\Gamma\To\Delta,A,A}}
\end{tabular}
\end{center}
\end{lemma}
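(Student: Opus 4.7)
My plan is to prove both CL and CR by simultaneous induction on the height $n$ of the derivation of the premise, following the standard $\mathsf{G3}$-style recipe. The argument will rest essentially on four already-established facts: the height-preserving invertibility of every rule (Lemma \ref{lem:invert}), the height-preserving admissibility of substitution (Lemma \ref{subs}), of signature contraction $SC$ (Lemma \ref{sig-struc-rules}), and of $Merge$ (Lemma \ref{merge}). The induction has to be simultaneous because inverting a two-premise rule such as $L{\to}$ can produce duplicates on either side of the sequent arrow.

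\textbf{Base case.} When $n=0$, the premise is either an initial sequent or an instance of $L\bot$. Since the atomic formula witnessing the axiom (or the occurrence of $\bot$ on the left) still remains after deleting one duplicate copy of the contracted formula $A$, the conclusion is again an initial sequent or an $L\bot$-instance of height $0$.

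\textbf{Inductive step.} Let $r$ be the last rule of the derivation. If no copy of the contracted formula $A$ is principal in $r$, I apply the IH to the premise(s) of $r$ and then reapply $r$; all side conditions (freshness of eigenvariables, and the depth conditions on $R_5$ and $R_{5dom}$) are preserved because the underlying sequent tree is unchanged. If one occurrence of $A$ is principal in $r$, then the other is carried passively into each premise, and I handle the remaining subcases uniformly by the ``invert-contract-reapply'' strategy. For rules that keep the principal formula in their premises---$L\fa$, $L\Box$, $Repl$, $Rig$, and all the propositional and domain structural rules of Table \ref{rulesQK}---the duplicated $A$ is already visible in the premise, so the IH applies directly and $r$ can be reapplied. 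For the rules that consume the principal formula ($L{\to}$ and $R{\to}$) I use height-preserving invertibility of $r$ on the remaining copy of $A$ in the premise(s), apply the IH (possibly on both sides) to the now-duplicated auxiliary formulas, and reapply $r$. The case of $R\Box$ on two succedent copies of $\Box A$ is handled by inverting the leftover $\Box A$ in the premise to obtain two fresh children $[\emptyset;\To A]$, combining them via $Merge$ into a single child $[\emptyset;\To A,A]$, using the IH to contract $A$, and finishing with $R\Box$.

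\textbf{Main obstacle.} The genuinely subtle case is $R\fa$ on a duplicated $\fa xA$ in the succedent. Here the premise has the form $\Se\{X,y;\GSD,\fa xA,A(y/x)\}$ with eigenvariable $y$ fresh to the conclusion, and invertibility of $R\fa$ applied to the surviving $\fa xA$ yields $\Se\{X,y,y';\GSD,A(y'/x),A(y/x)\}$ with $y'$ fresh. To realign the two witnesses I first apply the height-preserving substitution $(y/y')$ from Lemma \ref{subs}, obtaining $\Se\{X,y,y;\GSD,A(y/x),A(y/x)\}$; then $SC$ removes the signature duplicate, the IH contracts the two copies of $A(y/x)$, and a final application of $R\fa$ (with $y$ still eigen, since it does not occur free in the contracted conclusion) restores $\fa xA$. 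This is the one point at which the interplay between eigenvariables, signatures, and formula duplication forces the combined use of invertibility, substitution, and signature contraction; all other rules, including the domain structural rules and the depth-restricted $R_5$ and $R_{5dom}$, are absorbed by the uniform pattern above.
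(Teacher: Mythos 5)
Your proposal is correct and follows essentially the same route as the paper: simultaneous induction on the height of the derivation, with the crucial $R\fa$ case for CR resolved exactly as in the paper by inverting $R\fa$ on the surviving $\fa xA$, applying the height-preserving substitution to identify the two eigenvariables, removing the duplicate signature entry with $SC$, invoking the inductive hypothesis, and closing with $R\fa$. The paper leaves the remaining cases (including $R\Box$ via $Merge$) as ``similar or simpler,'' and your explicit treatment of them matches the standard pattern it implicitly relies on.
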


\begin{proof} By simultaneous induction on the height of the derivation of the premisses of CL and  CR. We consider only the non-trivial $R \forall$ case for CR as the remaining cases are similar or simpler. Assume that the last step of $\mathcal{D}$ is: 
$$
\infer[\infrule R\fa]{\context{\Se}{X;\GSD,\fa xA, \fa xA}}{\context{\Se}{X,y;\GSD,A(y/x),\fa xA}}
$$
 To resolve the case, we apply the height-preserving invertibility of $R\fa$, the height-preserving admissibility of $(y/z)$ and SC, followed by the inductive hypothesis. Finally, an application of $R\fa$ gives the desired conclusion.
$$
\infer[\infrule R\fa]{\context{\Se}{X;\GSD,\fa xA}}{
\infer[\infrule IH]{\context{\Se}{X,y;\GSD,A(y/x)}}{
\infer[\infrule SC]{\context{\Se}{X,y;\GSD,A(y/x),A(y/x)}}{
\infer[\infrule (y/z)]{\context{\Se}{X,y,y;\GSD,A(y/x),A(y/x)}}{
\infer[\infrule Lemma~\ref{lem:invert}]{\context{\Se}{X,y,z;\GSD,A(y/x),A(z/x)}}{
{\context{\Se}{X,y;\GSD,A(y/x),\fa xA}}
}}}}}
$$
\qed\end{proof}

Due to the presence of $R_4$ and $R_5$ in specific nested calculi, our cut elimination theorem (Theorem~\ref{cut} below) requires us to simultaneously eliminate a second form of cut that acts on modal formulas. We refer to this rule as $\mathsf{L}$-Cut and note that it is essentially Br\"unnler's $\mathsf{Y}$-$\mathsf{cut}$ rule~\cite{B09}. Since the principal and auxiliary formulas of $R_4$ and $R_5$ are of the same weight (i.e. both are $\Box A$), $\mathsf{L}$-Cut is needed to permute the cut upward in these special cases as cuts cannot be reduced to formulas of a smaller weight.

\begin{definition}[$\mathsf{L}$-Cut and $\mathsf{L}$-Str] Let $\mathsf{NQ.L}$ be properly closed. We define $\mathsf{L}$-Cut to be the following rule:
\begin{center}
\resizebox{\columnwidth}{!}{\AxiomC{$\Se\{X;\GSD,\Box A\}\{Y_{i}; \Pi_{i} \To \Sigma_{i}\}^{n}_{i=1}$}
\AxiomC{$\Se\{X;\Box A,\GSD\}\{Y_{i}; \Box A, \Pi_{i} \To \Sigma_{i}\}^{n}_{i=1}$}
\RightLabel{$\infrule{\mathsf{L}\text{-}Cut}$}
\BinaryInfC{$\Se\{X;\GSD\}\{Y_{i}; \Pi_{i} \To \Sigma_{i}\}^{n}_{i=1}$}
\DisplayProof}
\end{center}
which is subject to the following side conditions:
\begin{itemize}

\item if $\mathbf{4}, \mathbf{5} \not\in \mathsf{L}$, then $n = 0$;

\item if $\mathbf{4} \in \mathsf{L}$ and $\mathbf{5} \not\in \mathsf{L}$, then $\Se\{\cdot\}\{\cdot\}$ is of the form $\Se\{X; \GSD, \{\cdot\},\{\Se_{1}\{\cdot\}^{n}\}\}$;

\item if $\mathbf{5} \in \mathsf{L}$ and $\mathbf{4} \not\in \mathsf{L}$, then $Depth(\Se\{\cdot\}\{\emptyset\}^{n}) \geq 1$;

\item otherwise, if $\mathbf{4}, \mathbf{5} \in \mathsf{L}$, then no restriction on the shape of the rule is enforced.

\end{itemize}
We define $\mathsf{L}$-Str to be the following rule:
\begin{center}
\AxiomC{ }
\noLine
\UnaryInfC{$\Se\{Y_{1}; \Pi_{1} \Rightarrow \Sigma_{1}, [X;\GSD]\}\{Y_{2}; \Pi_{2} \Rightarrow \Sigma_{2}\}$}
\RightLabel{$\mathsf{L}$-Str}
\UnaryInfC{$\Se\{Y_{1}; \Pi_{1} \Rightarrow \Sigma_{1}\}\{Y_{2}; \Pi_{2} \Rightarrow \Sigma_{2}, [X;\GSD]\}$}
\DisplayProof
\end{center}
which is subject to the following side conditions:
\begin{itemize}

\item if $\mathbf{4}, \mathbf{5} \not\in \mathsf{L}$, then $\Se\{\cdot\}\{\cdot\}$ is of the form $\Se\{X; \GSD, \{\cdot\},\{\cdot\}\}$;

\item if $\mathbf{4} \in \mathsf{L}$ and $\mathbf{5} \not\in \mathsf{L}$, then $\Se\{\cdot\}\{\cdot\}$ is of the form $\Se\{X; \GSD, \{\cdot\},\{\Se_{1}\{\cdot\}\}\}$;

\item if $\mathbf{5} \in \mathsf{L}$ and $\mathbf{4} \not\in \mathsf{L}$, then $Depth(\Se\{\cdot\}\{\emptyset\}) \geq 1$;

\item otherwise, if $\mathbf{4}, \mathbf{5} \in \mathsf{L}$, then no restriction on the shape of the rule is enforced.

\end{itemize}
\end{definition}

\begin{lemma}[Special Structural Rules]\label{specialstruct} If $\mathsf{NQ.L}$ contains the rule $R_X$ for the propositional axiom $X$, then the corresponding structural rule from Table \ref{special} is admissible in $\mathsf{NQ.L}$. Moreover, $\mathsf{L}$-Str is admissible in $\mathsf{NQ.L}$.
\end{lemma}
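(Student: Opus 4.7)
My plan is to establish both claims by a simultaneous induction on the height of the derivation of the premise. The base case is routine: any initial sequent or instance of $L\bot$ remains an initial sequent or an instance of $L\bot$ after applying the structural rule (appealing to generalised initial sequents from Lemma~\ref{genax} when the relevant atomic principal formulas need to be repositioned).

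For the inductive step, I would proceed by case analysis on the last rule $r$ in the derivation of the premise. The standard sub-case is when $r$ is orthogonal to the structural manipulation: one applies the inductive hypothesis to the premises of $r$ and reapplies $r$, using Lemma~\ref{subs} to refresh any eigenvariables of $R\fa$ whose freshness condition could be jeopardised by the rearrangement. The interesting sub-case is when the structural rule interacts with the principal formula or the nesting shape touched by $r$. For each structural rule $X$-Str from Table~\ref{special}, I would resolve the interesting case by exhibiting a small derivation using the corresponding logical rule $R_X$, together with the height-preserving admissibility of internal and external weakening (Lemma~\ref{weakening}), Nec and Merge (Lemma~\ref{merge}), and the signature rules SW and SC (Lemma~\ref{sig-struc-rules}). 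For example, a transitivity-style $4$-Str of the form that collapses $[Y;\Pi \To \Sigma, [Z; \Pi'\To\Sigma']]$ into $[Y,Z; \Pi,\Pi' \To \Sigma,\Sigma']$ is absorbed by propagating boxed formulas with $R_4$ before applying Merge; the cases of $D$-, $T$-, $B$-, $5$-Str are handled analogously using the corresponding propositional rules.

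For $\mathsf{L}$-Str, the inductive step is more delicate because the rule relocates an entire bracketed sequent $[X;\GSD]$ between two contexts, so the critical sub-cases are those in which $r$ is $R\Box$, $L\Box$, $R_4$, $R_5$, or $R_{5dom}$, i.e., a rule that creates or depends on the surrounding nesting. In each such case, one first applies the inductive hypothesis to rearrange the bracket inside the premise of $r$, then reapplies $r$ at the repositioned site. The four side conditions on $\mathsf{L}$-Str are precisely what is needed for this reapplication to remain legitimate: when $\mathbf{4}\in\mathsf{L}$ and $\mathbf{5}\notin\mathsf{L}$, the required shape of $\Se\{\cdot\}\{\cdot\}$ guarantees the move is a one-step shift that can be compensated by an additional use of $R_4$; when $\mathbf{5}\in\mathsf{L}$ and $\mathbf{4}\notin\mathsf{L}$, the condition $Depth(\Se\{\cdot\}\{\emptyset\})\geq 1$ guarantees the reapplication of $R_5$ still satisfies its own depth restriction at the new location; and when both are present, no restriction is needed because $R_4$ and $R_5$ together are flexible enough to mimic any move.

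The main obstacle I expect is the combinatorial bookkeeping in the $\mathsf{L}$-Str argument, in particular verifying for every configuration of $\mathsf{L}$ that permuting $\mathsf{L}$-Str upward past $R_5$ or $R_{5dom}$ preserves the depth side condition. The assumption that $\mathsf{NQ.L}$ is properly closed is essential here: it is precisely what forces $R_{5dom}\in\mathsf{NQ.L}$ whenever $\mathbf{5}$ appears together with a domain axiom, so that the signature-level analogue of the permutation can be carried out uniformly with the formula-level one. I expect the case $\{\mathbf{4},\mathbf{5}\}\subseteq\mathsf{L}$ to be the cleanest, since no shape constraint is enforced, while the mixed cases will require the most careful diagrammatic reasoning.
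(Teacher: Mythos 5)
Your high-level skeleton (induction on the height of the premiss derivation, permuting the structural rule past the last inference and absorbing it in the ``interesting'' cases) matches the paper's, but the proposal misidentifies where the difficulty actually lies, and this is a genuine gap rather than a presentational difference. You locate the interesting sub-cases at the interaction between $X$-Str and \emph{its own} logical rule $R_X$, to be discharged by ``$R_X$ plus weakening/Merge/SW/SC''. In the actual proof the hard cases are interactions between $X$-Str and rules for \emph{other} axioms, above all the domain rules: permuting $S_B$ past $R_{bf}$ or $R_{5dom}$ requires $R_{cbf}$; permuting $S_T$ past $R_5$ requires $R_4$; permuting $S_T$ past $R_{5dom}$ requires $R_{bf}$; permuting $\mathsf{L}$-Str past $R_{cbf}$ (when $\mathbf{5}\in\mathsf{L}$, $\mathbf{4}\notin\mathsf{L}$) requires $R_{5dom}$. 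None of these auxiliary rules is the ``corresponding'' $R_X$, and their availability is exactly what the properly-closed hypothesis delivers; you invoke proper closure only to get $R_{5dom}$ in the $\mathsf{L}$-Str argument, so your strategy as stated has no way to close the $S_B$/$R_{bf}$, $S_T$/$R_5$, $S_T$/$R_{5dom}$, and $S_B$/$R_{5dom}$ cases. Several of these also need a path-length bookkeeping step (SW or IW applied $n-1$ times along the branch, then the domain or $R_4$ rule applied $n$ times back to the root) that does not follow from a single application of an admissible rule.

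A second, concrete error: you describe $S_4$ as collapsing $[Y;\Pi\To\Sigma,[Z;\Pi'\To\Sigma']]$ into $[Y,Z;\Pi,\Pi'\To\Sigma,\Sigma']$ and propose to absorb it by propagating boxes with $R_4$ and then applying Merge. The rule in Table~\ref{special} goes the other way: it pushes a child node down to become a grandchild by inserting an empty intermediate node $[\emptyset;\,\To\,]$. Nothing is merged, so Merge plays no role; the actual resolution of the critical $R_4$ case uses IW to weaken $\Box A$ into the freshly created empty node and then applies $R_4$ twice, and the critical $R_{cbf}$ case under $S_4$ needs SW followed by two applications of $R_{cbf}$. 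Your remarks on the side conditions of $\mathsf{L}$-Str and on the role of $R_{5dom}$ are in the right spirit, but the proof cannot be completed along the lines you sketch without the cross-axiom interactions above being worked out explicitly.
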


\begin{table}[t]\begin{center}\caption{Structural rules for propositional axioms}\label{special}
\scalebox{1
}{\begin{tabular}{ccccc}
\hline\hline\noalign{\medskip}
\multicolumn{2}{c}{\infer[\infrule S_T\qquad]{\Se\{X,Y;\Pi,\GSD,\Sigma\}}{\Se\{X;\GSD,[Y;\Pi\To\Sigma]\}}}
&
\multicolumn{3}{c}{\infer[\infrule S_4]{\Se\{X;\GSD,[\emptyset; \,\To\,,[Y;\Pi\To\Sigma]\,]\}}{\Se\{X;\GSD,[Y;\Pi\To\Sigma]\}} }
\\\noalign{\medskip}
\multicolumn{5}{c}{\infer[\infrule S_5,\;Depth(\Se\{\cdot\}\{\emptyset\}) \geq 1]{\Se\{Y_{1}; \Pi_{1} \Rightarrow \Sigma_{1}\}\{Y_{2}; \Pi_{2} \Rightarrow \Sigma_{2},[X;\GSD]\}}{\Se\{Y_{1}; \Pi_{1} \Rightarrow \Sigma_{1},[X;\GSD]\}\{Y_{2}; \Pi_{2} \Rightarrow \Sigma_{2}\}} }
\\\noalign{\medskip}
\multicolumn{5}{c}{\infer[\infrule S_B]{\Se\{ X,Z;\Pi_1,\GSD,\Sigma_1,[Y;\Pi_2\To\Sigma_2]\}}{\Se\{ X;\GSD,[Y;\Pi_2\To\Sigma_2,[Z;\Pi_1\To\Sigma_1 ]\,]\}} }
\\\noalign{\medskip}\hline\hline
\end{tabular}}\end{center}
\end{table}

\begin{proof} We argue the $S_B$ case by induction on the height of the given derivation; the remaining cases follow from the lemmas in the appendix. We only consider the $R_{bf}$ and $R_{5dom}$ cases of the inductive step as the remaining cases are simple or similar.
\begin{center}
\AxiomC{$\context{\Se}{Z; \Pi_{1} \To \Sigma_{1},[X,x;\GSD,[Y,x;\Pi_{2}\To\Sigma_{2}]]}$}
\RightLabel{$\infrule{R_{bf}}$}
\UnaryInfC{$\context{\Se}{Z; \Pi_{1} \To \Sigma_{1},[X;\GSD,[Y,x;\Pi_{2}\To\Sigma_{2}]]}$}
\RightLabel{$\infrule{S_B}$}
\UnaryInfC{$\context{\Se}{Z,Y,x; \Pi_{1},\Pi_{2} \To \Sigma_{1},\Sigma_{2},[X;\GSD]}$}
\DisplayProof
\end{center}
 As our nested calculi are assumed to be properly closed, we know that if $\mathsf{NQ.L}$ contains $R_B$ and $R_{bf}$, then it must contain $R_{cbf}$, showing that we can apply $IH$ first and then $R_{cbf}$ as shown below.
\begin{center}
\AxiomC{$\context{\Se}{Z; \Pi_{1} \To \Sigma_{1},[X,x;\GSD,[Y,x;\Pi_{2}\To\Sigma_{2}]]}$}
\RightLabel{$\infrule{IH}$}
\UnaryInfC{$\context{\Se}{Z,Y,x; \Pi_{1},\Pi_{2} \To \Sigma_{1},\Sigma_{2},[X,x;\GSD]}$}
\RightLabel{$\infrule{R_{cbf}}$}
\UnaryInfC{$\context{\Se}{Z,Y,x; \Pi_{1},\Pi_{2} \To \Sigma_{1},\Sigma_{2},[X;\GSD]}$}
\DisplayProof
\end{center}
Last, we consider an interesting $R_{5dom}$ case:
\begin{center}
\AxiomC{$Z;\Pi_{1}\To\Sigma_{1},[X_{1};\Gamma_{1} \To \Delta_{1}, [X_{2},x;  \Gamma_{2} \To \Delta_{2}]],[\Se\{Y,x;\Pi_{2}\To\Sigma_{2}\}]$}
\RightLabel{$\infrule R_{5dom}$}
\UnaryInfC{$Z;\Pi_{1}\To\Sigma_{1},[X_{1};\Gamma_{1} \To \Delta_{1}, [X_{2},x;  \Gamma_{2} \To \Delta_{2}]],[\Se\{Y;\Pi_{2}\To\Sigma_{2}\}]$}
\RightLabel{$\infrule S_{B}$}
\UnaryInfC{$Z,X_{2},x;\Pi_{1},\Gamma_{2}\To\Sigma_{1},\Delta_{2},[X_{1},\Gamma_{1} \To \Delta_{1}],[\Se\{Y;\Pi_{2}\To\Sigma_{2}\}]$}
\DisplayProof
\end{center}
 To resolve the case, we apply the inductive hypothesis, followed by the height-preserving admissible rule SW. We apply the SW rule $n-1$ times adding the variable $x$ along the path from the root to $Y,x;\Pi_{2}\To\Sigma_{2}$, and then the $R_{cbf}$ rule $n$ times to delete the $n-1$ copies of $x$ up to the root. We may apply $R_{cbf}$ as our nested calculi are properly closed, that is,  ${\bf B}, {\bf BF} \in \mathsf{L}$ only if ${\bf CBF} \in \mathsf{L}$.
\begin{center}
\AxiomC{$Z;\Pi_{1}\To\Sigma_{1},[X;\Gamma_{1} \To \Delta_{1}, [X_{2},x; \Gamma_{2} \To \Delta_{2}]],[\Se\{Y,x;\Pi_{2}\To\Sigma_{2}\}]$}
\RightLabel{$\infrule IH$}
\UnaryInfC{$Z,X_{2},x;\Pi_{1},\Gamma_{2}\To\Sigma_{1},\Delta_{2},[X,\Gamma_{1} \To \Delta_{1}],[\Se\{Y,x;\Pi_{2}\To\Sigma_{2}\}]$}
\RightLabel{$\infrule{ SW\, \text{($n-1$ times)}}$}
\UnaryInfC{$Z,X_{2},x;\Pi_{1},\Gamma_{2}\To\Sigma_{1},\Delta_{2},[X,\Gamma_{1} \To \Delta_{1}],[\Se\{Y,x;\Pi_{2}\To\Sigma_{2}\}]$}
\RightLabel{$\infrule{ R_{cbf}\, \text{ ($n$ times)}}$}
\UnaryInfC{$Z,X_{2},x;\Pi_{1},\Gamma_{2}\To\Sigma_{1},\Delta_{2},[X,\Gamma_{1} \To \Delta_{1}],[\Se\{Y;\Pi_{2}\To\Sigma_{2}\}]$}
\DisplayProof
\end{center}
\qed\end{proof}

In our cut-elimination theorem below, we provide a procedure to eliminate an additive (i.e. context-sharing) version of cut as in the work on nested sequents for propositional modal logics by Br\"unnler \cite{B09}. We note that we could have considered an equivalent, multiplicative (i.e. context-independent) version---like the cut rule shown eliminable in the tree-hypersequent systems of Poggiolesi~\cite{P09}---however, we find the additive version of the rule to be simpler as we can forgo considerations of how to fuse nested sequents of a different form.\footnote{Nested sequents and tree-hypersequents are equivalent formalisms; cf.~\cite{B09,P09}.}

\begin{theorem}[Cut]\label{cut} $\mathsf{L}$-Cut and the following rule of Cut are admissible in $\mathsf{NQ.L}$:
$$
\infer[\infrule Cut ]{\Se\{X;\GSD\}}{\Se\{X;\GSD,A\}&\Se\{X;A,\GSD\}}
$$
\end{theorem}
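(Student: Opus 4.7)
The plan is to prove the admissibility of $\mathsf{L}$-\textbf{Cut} and \textbf{Cut} simultaneously by a primary induction on the weight $|A|$ of the cut formula and a secondary induction on the sum of the heights of the derivations of the two premisses. Given a topmost cut, I would case-split on the last rules applied in each premiss.

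First, I would dispatch the \emph{reduction cases}, in which $A$ is not principal in one of the premisses: the cut permutes upward past the last rule, appealing to the height-preserving admissibility of substitution (Lemma~\ref{subs}), the signature structural rules (Lemma~\ref{sig-struc-rules}), and internal/external weakening (Lemma~\ref{weakening}) to discharge the eigenvariable-freshness condition on $R\fa$ and the side conditions on the domain rules. The base cases (initial sequent or $L\bot$ on one side) are routine: if the cut formula coincides with a principal atom of an initial sequent, one invokes Lemma~\ref{genax} together with admissible weakenings, while equalities are additionally handled using Lemma~\ref{axid}.

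I would then turn to the \emph{principal cases}, where $A$ is principal in both premisses. For the propositional connectives and for $A = \fa xB$ the cuts reduce to cuts on proper subformulas; in the $R\fa/L\fa$ case I apply the height-preserving invertibility of $R\fa$ (Lemma~\ref{lem:invert}) and the admissible substitution to align the eigenvariable with the witness used by $L\fa$, and then cut on $B(y/x)$. When $A = (x=y)$ is principal via \emph{Rig}, \emph{Repl}, or \emph{Ref}, the cut is eliminated using Lemma~\ref{axid} and contraction (Lemma~\ref{contr}), since equations are atomic and the identity rules merely propagate them.

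The delicate case is $A = \Box B$. When an ordinary \textbf{Cut} must be permuted past an $R_4$ or $R_5$ instance on the right premiss, the auxiliary copies of $\Box B$ have the same weight as the principal one, so a straightforward ordinary-cut reduction fails; instead I invoke $\mathsf{L}$-\textbf{Cut}, whose conclusion has the same cut-weight but strictly smaller total height, so the secondary induction applies. Symmetrically, to eliminate $\mathsf{L}$-\textbf{Cut} on $\Box B$ when the left premiss ends with $R\Box$ introducing $\Box B$ from $[\emptyset;\To B]$, I use the admissibility of $\mathsf{L}$-Str (Lemma~\ref{specialstruct}) to relocate that boxed sequent to each of the $n$ sites where a copy of $\Box B$ was cut on the right, and then reduce to $n$ ordinary cuts on $B$, whose weight is strictly smaller. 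The main obstacle will be coordinating the depth side conditions of $\mathsf{L}$-\textbf{Cut} and $\mathsf{L}$-Str across the four configurations of $\{\mathbf{4},\mathbf{5}\}\cap\mathsf{L}$, and verifying that the \emph{properly closed} assumption supplies the structural rules ($S_T$, $S_4$, $S_5$, $S_B$ and, crucially, $R_{5dom}$) needed to move signatures alongside the boxed sequent whenever $\mathbf{CBF}$ or $\mathbf{BF}$ is present.
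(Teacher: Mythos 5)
Your proposal matches the paper's proof in all essentials: a simultaneous induction on the lexicographic pair (cut-formula weight, sum of premiss heights), permutation of non-principal cuts using the admissible substitution, signature, and weakening rules, reduction of the principal $\fa xB$ case to a cut on the instantiated subformula via substitution and signature contraction, and use of $\mathsf{L}$-Cut together with the special structural rules $S_T$, $S_4$, $S_5$, $S_B$ and $\mathsf{L}$-Str (whose availability rests on the properly-closed assumption) to handle the principal $\Box B$ case arising from $R_4$/$R_5$. The only divergences are presentational (e.g., the paper dispatches the principal $x=y$ case by observing that the left premiss is then an initial sequent and applying left contraction, rather than via Lemma~\ref{axid}), so the approach is essentially the paper's own.
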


\begin{proof} We consider an uppermost instance of $\mathsf{L}$-Cut or {\em Cut} with $A \equiv \Box B$ and $A$ the cut formula of each rule, respectively. We argue by simultaneous induction on the lexicographic ordering of pairs $(|A|,h_{1}+h_{2})$, where $|A|$ is the weight of $A$ and $h_{1}$ ($h_2$) is the height of the derivation $\mathcal{D}_1$ ($\mathcal{D}_2$) of the left (right) premiss of the  instance of $\mathsf{L}$-Cut or {\em Cut} under consideration.

Let us first consider the case where the weight of $A$ is zero, i.e. $A$ is a formula of the form $R_i^n(x_1,\dots,x_n)$, $\bot$, or $x = y$. The first two cases are standard, so we consider the case when $A$ is of the form $x = y$. We suppose first that $x=y$ is not principal in the left premiss of {\em Cut}. If the left premiss is an initial sequent or an instance of $L\bot$, then the conclusion  will be as well, so we may assume that the left premiss was derived by means of another rule. We suppose w.l.o.g. that the left premiss was derived by means of a unary rule as the binary case for $L\to$ is similar, meaning our Cut is of the following form:
\begin{center}\scalebox{1}{
\AxiomC{$\Se_{1}\{X_{1};\Gamma_{1} \Rightarrow \Delta_{1},x=y\}$}
\RightLabel{$\infrule R1$}
\UnaryInfC{$\Se\{X;\GSD,x=y\}$}

\AxiomC{$\Se_{2}\{X_{2};x=y,\Gamma_{2} \Rightarrow \Delta_{2}\}$}
\RightLabel{$\infrule R2$}
\UnaryInfC{$\Se\{X;x=y,\GSD\}$}

\RightLabel{$\infrule Cut$}
\BinaryInfC{$\Se\{X;\GSD\}$}
\DisplayProof}
\end{center}
 As shown below, we can resolve the case by applying the height-preserving invertibility of {\em R1} to the right premiss of {\em Cut}, applying {\em Cut} with the premiss of {\em R1}, and then applying {\em R1} after (note that {\em R1} is applicable after the Cut since $x=y$ is neither auxiliary nor principal in {\em R1} by the shape of the rules in $\mathsf{NQ.L}$).
\begin{center}\scalebox{0.95}{
\AxiomC{$\Se_{1}\{X_{1};\Gamma_{1} \Rightarrow \Delta_{1},x=y\}$}

\AxiomC{$\Se_{2}\{X_{2};x=y,\Gamma_{2} \Rightarrow \Delta_{2}\}$}
\RightLabel{$\infrule R2$}
\UnaryInfC{$\Se\{X;x=y,\GSD\}$}
\RightLabel{$\infrule Lemma~\ref{lem:invert}$}
\UnaryInfC{$\Se_{1}\{X_{1};x=y,\Gamma_{1} \Rightarrow \Delta_{1}\}$}

\RightLabel{$\infrule Cut$}
\BinaryInfC{$\Se_{1}\{X_{1};\Gamma_{1} \Rightarrow \Delta_{1}\}$}
\RightLabel{$\infrule R1$}
\UnaryInfC{$\Se\{X;\GSD\}$}
\DisplayProof}
\end{center}
 If we suppose now that $x = y$ is principal in the left premiss of {\em Cut}, then the left premiss must be an initial sequent of the form $\Se\{X, x=y,\GSD,x=y\}$. We have cases according to whether $x=y$ is principal or not in the right premiss.
 If it is principal then the right premiss is either (i) an initial sequent or  (ii) the conclusion of an instance of   a rule in $\{Repl,\,Repl_X,\,Rig\}$. In case (i) the conclusion of {\em Cut} is an intial sequent and in case (ii) the conclusion of {\em Cut} is identical to the conclusion of its right premiss, which is cut-free derivable.
Else, the Cut is  of  the form shown below, where two copies of $x=y$ must occur in the right premiss since the contexts must match in Cut.
 \begin{center}
\AxiomC{$\Se\{X;x=y,\GSD,x=y\}$}

\AxiomC{$\Se'\{X';x=y,x=y,\Gamma' \Rightarrow \Delta'\}$}
\RightLabel{$\infrule R2$}
\UnaryInfC{$\Se\{X;x=y,x=y,\GSD\}$}

\RightLabel{$\infrule Cut$}
\BinaryInfC{$\Se\{X;x=y,\GSD\}$}
\DisplayProof
\end{center}
Applying the height-preserving admissible rule CL to the right premiss of {\em Cut} 
gives the desired conclusion.
 
 Let us suppose now that the weight of the cut formula is greater than zero. We also assume that the cut formula is principal in both premisses of {\em Cut} and consider the interesting cases when $A \equiv \fa xB$ and $A \equiv \Box B$ as all other cases are standard, see \cite[Thm.~5]{B09}. If the cut formula $A \equiv \fa xB$ is principal in both premisses of {\em Cut}, then our Cut is of the following form: 
\begin{center}
\AxiomC{$\Se\{X,y,z;\GSD,B(y/x)\}$}
\RightLabel{$\infrule R\fa$}
\UnaryInfC{$\Se\{X,z;\GSD,\fa xB\}$}
\AxiomC{$\Se\{X,z;B(z/x),\fa xB,\GSD\}$}
\RightLabel{$\infrule L\fa$}
\UnaryInfC{$\Se\{X,z;\fa xB,\GSD\}$}
\RightLabel{$\infrule Cut$}
\BinaryInfC{$\Se\{X,z;\GSD\}$}
\DisplayProof
\end{center}
 We first shift the Cut upward by applying the height-preserving admissibility of IW to the left premiss of {\em Cut}, and then apply {\em Cut} with the premiss of $L\fa$ as shown below, thus reducing $h_{1}+h_{2}$.
\begin{center}
\AxiomC{$\Se\{X,y,z;\GSD,B(y/x)\}$}
\RightLabel{$\infrule R\fa$}
\UnaryInfC{$\Se\{X,z;\GSD,\fa xB\}$}
\RightLabel{$\infrule IW$}
\UnaryInfC{$\Se\{X,z;B(z/x),\GSD,\fa xB\}$}
\AxiomC{$\Se\{X,z;B(z/x),\fa xB,\GSD\}$}
\RightLabel{$\infrule Cut$}
\BinaryInfC{$\Se\{X,z;B(z/x),\GSD\}$}
\DisplayProof
\end{center}
Let us refer to the above proof as $\mathcal{D}$. We now reduce the weight of the cut formula by applying Cut as shown below, giving the desired conclusion.
\begin{center}
\AxiomC{$\Se\{X,y,z;\GSD,B(y/x)\}$}
\RightLabel{$\infrule (z/y)$}
\UnaryInfC{$\Se\{X,z,z;\GSD,B(z/x)\}$}
\RightLabel{$\infrule SC$}
\UnaryInfC{$\Se\{X,z;\GSD,B(z/x)\}$}
\AxiomC{$\mathcal{D}$}
\RightLabel{$\infrule Cut$}
\BinaryInfC{$\Se\{X,z;\GSD\}$}
\DisplayProof
\end{center}

 We now assume that the cut formula $A \equiv \Box B$ is principal in both premisses and we may assume w.l.o.g. that the cut is an instance of $\mathsf{L}$-Cut. We consider the case where the right premiss of $\mathsf{L}$-Cut is an instance of $R_T$ and the left premiss of $\mathsf{L}$-Cut is an instance of $R\Box$. The remaining cases are proven in a similar fashion. The trick is to use the height-preserving admissibility of the special structural rules (see Lemma~\ref{specialstruct}), namely, the $S_{T}$ rule. Our $\mathsf{L}$-Cut is of the following form:\medskip

\noindent \scalebox{0.78}{
\AxiomC{$\Se\{X;\GSD,[\emptyset; \, \Rightarrow B]\}\{Y_{i}; \Pi_{i} \To \Sigma_{i}\}^{n}_{i=1}$}
\RightLabel{$\infrule R\Box$}
\UnaryInfC{$\Se\{X;\GSD,\Box B\}\{Y_{i}; \Pi_{i} \To \Sigma_{i}\}^{n}_{i=1}$}
\AxiomC{$\Se\{X;\Box B,B,\GSD\}\{Y_{i}; \Box B, \Pi_{i} \To \Sigma_{i}\}^{n}_{i=1}$}
\RightLabel{$\infrule R_T$}
\UnaryInfC{$\Se\{X;\Box B,\GSD\}\{Y_{i}; \Box B, \Pi_{i} \To \Sigma_{i}\}^{n}_{i=1}$}
\RightLabel{$\infrule{\mathsf{L}\text{-}Cut}$}
\BinaryInfC{$\Se\{X;\GSD\}\{Y_{i}; \Pi_{i} \To \Sigma_{i}\}^{n}_{i=1}$}
\DisplayProof}\medskip

\noindent Let $\mathcal{D}_{1}$ and $\mathcal{D}_{2}$ denote the derivation of the left and right premiss of $\mathsf{L}$-Cut, respectively. To resolve the case, we first apply the height-preserving admissible rule IW to the conclusion of $\mathcal{D}_1$, yielding the derivation $\mathcal{D}_{3}$ shown below top. We then apply $\mathsf{L}$-Cut to the conclusion of $\mathcal{D}_3$ and the premiss of $\mathcal{D}_2$ (where $h_{1}+h_{2}$ is strictly smaller), giving the second derivation shown below, which we refer to as $\mathcal{D}_{4}$. Finally, as shown in the third derivation below, we can apply Cut to $B$ (which has a strictly smaller weight than $\Box B$), and derive the desired conclusion after applying a single application of the admissible rule $S_{T}$ to the left premiss.
\begin{center}

$\mathcal{D}_3 \left \{
\begin{array}{l}
\phantom{a}\\\noalign{\smallskip}
\phantom{A}\\\noalign{\smallskip}
\phantom{A}\\
\end{array} \right.  $
\AxiomC{$\Se\{X;\GSD,[\emptyset; \, \Rightarrow B]\}\{Y_{i}; \Pi_{i} \To \Sigma_{i}\}^{n}_{i=1}$}
\RightLabel{$\infrule R\Box$}
\UnaryInfC{$\Se\{X;\GSD,\Box B\}\{Y_{i}; \Pi_{i} \To \Sigma_{i}\}^{n}_{i=1}$}
\RightLabel{$\infrule IW$}
\UnaryInfC{$\Se\{X;B,\GSD,\Box B\}\{Y_{i}; \Pi_{i} \To \Sigma_{i}\}^{n}_{i=1}$}
\DisplayProof
\end{center}
\begin{center}
$\mathcal{D}_4 \left \{
\begin{array}{l}
\phantom{A}\\\noalign{\smallskip}
\phantom{A}\\
\end{array} \right.  $
\AxiomC{$\mathcal{D}_3$}
\AxiomC{$\Se\{X;\Box B,B,\GSD\}\{Y_{i}; \Box B, \Pi_{i} \To \Sigma_{i}\}^{n}_{i=1}$}
\RightLabel{$\infrule{\mathsf{L}\text{-}Cut}$}
\BinaryInfC{$\Se\{X;B,\GSD\}\{Y_{i}; \Pi_{i} \To \Sigma_{i}\}^{n}_{i=1}$}
\DisplayProof
\end{center}

\begin{center}
\AxiomC{$\Se\{X;\GSD,[\emptyset; \, \Rightarrow B]\}\{Y_{i}; \Pi_{i} \To \Sigma_{i}\}^{n}_{i=1}$}
\RightLabel{$\infrule S_{T}$}
\UnaryInfC{$\Se\{X;\GSD, B\}\{Y_{i}; \Pi_{i} \To \Sigma_{i}\}^{n}_{i=1}$}

\AxiomC{$\mathcal{D}_{4}$}

\RightLabel{$\infrule Cut$}
\BinaryInfC{$\Se\{X;\GSD\}\{Y_{i}; \Pi_{i} \To \Sigma_{i}\}^{n}_{i=1}$}
\DisplayProof
\end{center}
\qed\end{proof}

\section{Soundness and Completeness}\label{sec:characterisation}

\begin{theorem}[Soundness] If $\mathsf{NQ.L}\vdash \Se$ then $\texttt{fm}(\Se)$ is $\mathsf{Q.L}$-valid.
\end{theorem}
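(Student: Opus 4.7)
The plan is to proceed by induction on the height of an $\mathsf{NQ.L}$-derivation of $\Se$, verifying that initial sequents have $\mathsf{Q.L}$-valid formula interpretations and that every rule of $\mathsf{NQ.L}$ preserves $\mathsf{Q.L}$-validity of $\texttt{fm}(\cdot)$ from its premisses to its conclusion. Here $\mathsf{Q.L}$ is the logic whose class of frames is determined by proper closure and Table~\ref{corr}. The base cases are essentially immediate: unfolding $\texttt{fm}$ shows that at the hole of $\Se\{X;P,\GSD,P\}$ the antecedent tautologically implies the consequent, and similarly for $\Se\{X;\bot,\GSD\}$; the recursive clause of $\texttt{fm}$ only glues such locally valid formulas together via $\land,\lor,\to,\Box$, so validity propagates to the whole nested sequent.

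For the inductive step, the cleanest route is to introduce an auxiliary satisfaction relation $\sigma\Vdash_w^\M\Se$ obtained by reading the tree of $\Se$ against the relational and domain structure of $\M$, and to check by a routine induction on $\Se$ that it coincides with $\sigma\Vdash_w^\M\texttt{fm}(\Se)$. Under this local reading, a signature $X$ at the node associated with $w$ expresses exactly the hypothesis $\{\sigma(x):x\in X\}\subseteq\D_w$, since $\E x$ abbreviates $\ex y(y=x)$. One then treats each rule locally at its hole. The propositional and identity rules are the standard $\mathsf{G3}$-style arguments, with $Rig$ justified by the rigidity of variables. The modal rules $L\Box$ and $R\Box$ are the usual $\mathsf{K}$-soundness arguments at the child node. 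The quantifier rules $L\fa$ and $R\fa$ go through because the signature furnishes precisely the elements of $\D_w$ eligible to instantiate $\fa x$. The rules $R_D,R_T,R_B,R_4,R_5$ are sound over frames satisfying the corresponding first-order conditions of Table~\ref{corr}; the hole-depth side condition on $R_5$ is exactly what allows one to link the worlds at its two holes via an Euclidean step (supplemented, where required, by the transitivity or symmetry already present by proper closure). The domain rules $R_{cbf},R_{bf},R_{ui}$ are the syntactic mirror of increasing, decreasing, and constant domains respectively, and their soundness is immediate from the associated frame condition.

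The principal obstacle will be $R_{5dom}$. Its two holes may sit at arbitrary positions in the tree, so its soundness has to combine the Euclidean condition with one of the domain-inheritance axioms. Proper closure guarantees both $\mathbf{5}\in\mathsf{L}$ and $\{\mathbf{CBF},\mathbf{BF}\}\cap\mathsf{L}\neq\emptyset$; a short semantic argument then shows that whenever $w_1$ and $w_2$ are reachable from a common ancestor along the tree, $\sigma(x)\in\D_{w_1}$ implies $\sigma(x)\in\D_{w_2}$, using Euclideanness to link the two worlds and the chosen domain-inheritance axiom to transport $\sigma(x)$. The two $Depth\geq 1$ side conditions are precisely what guarantee such a common ancestor exists. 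With this frame-level fact in hand the local implication at the two holes follows, closing the inductive step for $R_{5dom}$ and, with it, the soundness proof.
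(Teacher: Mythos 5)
Your proposal is correct and follows essentially the same route as the paper: induction on derivation height, reduction to local soundness at the hole (the paper does this via the lemma that $\mathsf{Q.L}$-validity of $A\to B$ yields validity of $\texttt{fm}(\Se\{A\})\to\texttt{fm}(\Se\{B\})$, which plays the role of your auxiliary tree-satisfaction relation), with $L\fa$ justified by $\mathbf{UI}^{\E}$, $Rig$ by rigidity, and the domain rules by the corresponding frame conditions. The only presentational difference is $R_{5dom}$, which the paper first decomposes into three derivable one-context rules before giving the same semantic justification you give directly (mutual accessibility of the two non-root worlds in a Euclidean frame plus domain inheritance forcing equal domains).
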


\begin{proof} We first note that nested application of rules is sound: for each context $\Se\{\cdot\}$, if $A\to B$ is $\mathsf{Q.L}$-valid then $\texttt{fm}(\Se\{A\}) \to \texttt{fm}(\Se\{B\})$ is $\mathsf{Q.L}$-valid. This can be shown by induction on the depth of the context $\Se\{\cdot\}$; see \cite[Lem.~3]{B09} for details.

The $\mathsf{Q.L}$-soundness of the rules of $\mathsf{NQ.L}$ is proved by induction on the height of the derivation. The cases of initial sequents and of  propositional rules of $\mathsf{NQ.L}$ are given in \cite[Thm.~1]{B09}. We present the cases of $L\fa$, $R_{cbf}$,  $Rig$, and $R_{5dom}$, all other cases being similar. If $\texttt{fm}(X,z; A(z/x),\fa xA,\GSD)$ is $\mathsf{Q.L}$-valid, then the $\mathsf{Q.L}$-validity of $\texttt{fm}(X,z; \fa xA,\GSD)$ follows by the soundness of the axiom {\bf UI}$^\E$. If $\texttt{fm}(X,x;\GSD,[Y,x;\Pi\To\Sigma])$ is $\mathsf{Q.L.CBF}$-valid, then the formula  $\texttt{fm}(X,x;\GSD,[Y;\Pi\To\Sigma])$ is as well because frames for $\mathsf{Q.L.CBF}$ have increasing domains. The $\mathsf{Q.L}$-validity of $\texttt{fm}(\Se\{X;x=y,\GSD\}\{Y;\Pi\To\Sigma\})$ follows from that of $\texttt{fm}(\Se\{X;x=y,\GSD\}\{Y;x=y,\Pi\To\Sigma\})$ since variables are rigid designators---i.e., the validity of $\mathbf{NI}:= x=y\to\Box(x=y)$ and that of $\mathbf{ND}$ allow identities to be duplicated up and down the accessibility relation, respectively. Finally, we argue that $R_{5dom}$ preserves $\mathsf{Q.L}$-validity when either $\mathbf{5},\mathbf{CBF}\in\mathsf{L}$ or  $\mathbf{5},\mathbf{BF}\in\mathsf{L}$. We show this holds for the  following one-context  rules  from which  $R_{5dom}$ is $\mathsf{NQ.L}$-derivable (if $x$ is in the signature of a non-root node, these rules bottom-up copy $x$ into the signature of another non-root node):
$$
\scalebox{0.95}{\infer[\infrule R_{5
dom_1}]{\Se\{[X,x;\GSD],[Y;\Pi \Rightarrow \Sigma]\}}{\Se\{[X,x;\GSD],[Y,x;\Pi \Rightarrow \Sigma]\}}
\quad
\infer[\infrule R_{5
dom_2}]{\Se\{[X,x;\GSD,[Y;\Pi \Rightarrow \Sigma]]\}}{\Se\{[X,x;\GSD,[Y,x;\Pi \Rightarrow \Sigma]]\}}
}
$$
$$
\infer[\infrule R_{5
dom_3}]{\Se\{[Y;\Pi \Rightarrow \Sigma,[X,x; \GSD]]\}}{\Se\{[Y,x;\Pi \Rightarrow \Sigma,[X,x;\GSD]]\}}
$$

\noindent If 
the premiss of one of these rule is $\mathsf{Q.L}$-valid,  then so is 
the respective conclusion since for {\bf 5}-frames with increasing  or decreasing domains the points satisfying $X,x;\Gamma \Rightarrow \Delta$ and $Y;\Pi \Rightarrow \Sigma$ are mutually accessible and have the same domain. 
\qed\end{proof}

\begin{theorem}[Completeness]   If $\texttt{fm}(\Se)$ is $\mathsf{Q.L}$-valid, then $\mathsf{NQ.L}\vdash \Se$.
\end{theorem}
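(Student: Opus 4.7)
The plan is to reduce nested-sequent completeness to axiomatic completeness (Theorem~\ref{axcomp}). By Theorem~\ref{axcomp}, $\mathsf{Q.L}$-validity of $\texttt{fm}(\Se)$ coincides with axiomatic provability. Thus the plan has two parts: (i) simulate the axiomatic calculus for $\mathsf{Q.L}$ inside $\mathsf{NQ.L}$, i.e., show $\mathsf{NQ.L}\vdash\,\To A$ whenever $A$ is an axiomatic theorem of $\mathsf{Q.L}$; and (ii) show that $\mathsf{NQ.L}\vdash\,\To \texttt{fm}(\Se)$ implies $\mathsf{NQ.L}\vdash \Se$.

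For (i), I proceed by induction on an axiomatic derivation of $A$. For the base case, I exhibit $\mathsf{NQ.L}$-derivations of the axioms of Table~\ref{axioms} together with the relevant axioms of Table~\ref{corr}. Propositional tautologies and axiom $\mathbf{K}$ are handled by generalised initial sequents (Lemma~\ref{genax}) and the standard derivations using $R\Box, L\Box, L{\to}, R{\to}$. The first-order axioms UI$^\circ$, $\fa$-COMM, $\fa$-DIST, and $\fa$-VAQ are derived by bottom-up applications of $R\fa$ (which inject fresh eigenvariables into the signature), enabling corresponding applications of $L\fa$ on the same terms. The identity axioms REF and REPL follow from Lemma~\ref{axid}, while ND is derived by unfolding the definition of $\neq$ and applying the $Rig$ rule to propagate $x=y$ across the modal edge. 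The propositional modal axioms $\mathbf{D}$, $\mathbf{T}$, $\mathbf{B}$, $\mathbf{4}$, $\mathbf{5}$ are derived via their associated structural rules $R_D, R_T, R_B, R_4, R_5$, and the domain axioms $\mathbf{CBF}, \mathbf{BF}, \mathbf{UI}$ via $R_{cbf}, R_{bf}, R_{ui}$, with $R_{5dom}$ invoked when proper closure of $\mathsf{L}$ brings it into play. For the inductive step, admissibility of modus ponens follows from admissible Cut (Theorem~\ref{cut}); necessitation N follows by combining the $Nec$ rule (Lemma~\ref{merge}) with $R\Box$; and universal generalisation UG follows by admissible substitution (Lemma~\ref{subs}) together with $R\fa$.

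For (ii), I prove by induction on the structure of $\Se$ that $\mathsf{NQ.L}$ derives the sequent $\Se^\bullet$ obtained from $\Se$ by placing $\texttt{fm}(\Se)$ into the antecedent at the root. The proof uses Lemmas~\ref{genax} and \ref{axid}, the height-preserving invertibility of the logical rules (Lemma~\ref{lem:invert}) for $\to, \land, \lor, \Box$, and the admissible structural rules from Lemmas~\ref{sig-struc-rules} and \ref{weakening}. Each boxed formula interpretation $\Box\,\texttt{fm}(\Se_k)$ in the succedent is discharged by invoking $R\Box$ and the inductive hypothesis on $\Se_k$; each conjunct $\E x = \ex y(y=x)$ in the antecedent is discharged by $L\ex$ to introduce a fresh $z$ with $z = x$ in the antecedent, after which $Repl_X$ (applied bottom-up) justifies the presence of $x$ in the signature. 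Once $\mathsf{NQ.L}\vdash\Se^\bullet$ is established, applying admissible Cut (Theorem~\ref{cut}) to $\mathsf{NQ.L}\vdash\,\To\texttt{fm}(\Se)$ and $\mathsf{NQ.L}\vdash\Se^\bullet$ with cut formula $\texttt{fm}(\Se)$ yields $\mathsf{NQ.L}\vdash\Se$.

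The main obstacle lies in part (ii): the signature conjuncts $\E x = \ex y(y=x)$ must be carefully unfolded so that the variables witnessing existence are identified with the terms already appearing in the signature, which requires a coordinated use of $L\ex$, $Repl$, and $Repl_X$ across an arbitrary nested context. A secondary difficulty in part (i) is the derivation of ND, where propositional manipulation of $\neq$ must be interleaved with $Rig$ applications beneath several modal nestings; the fact that the $\E$ predicate is definable via identity is exactly what makes both of these manipulations available, which is why the internal formula interpretation works over $\mathcal{L}$.
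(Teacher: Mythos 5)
Your part (i) is essentially the paper's own proof: the paper likewise reduces to Theorem~\ref{axcomp} and argues by induction on the height of the axiomatic derivation of $\texttt{fm}(\Se)$, deriving each axiom in $\mathsf{NQ.L}$ (it displays only $\mathbf{UI}^\circ$, $\mathbf{ND}$, and $\mathbf{CBF}$, appealing to Lemmas~\ref{genax} and~\ref{axid} for the rest) and obtaining $\mathbf{MP}$, $\mathbf{UG}$, and $\mathbf{N}$ from Theorem~\ref{cut}, Lemma~\ref{weakening}, and Lemma~\ref{merge} respectively. Your part (ii) --- the passage from $\mathsf{NQ.L}\vdash\;\To\texttt{fm}(\Se)$ to $\mathsf{NQ.L}\vdash\Se$ --- is left entirely implicit in the paper, which announces that it shows ``$\mathsf{Q.L}\vdash\texttt{fm}(\Se)$ implies $\mathsf{NQ.L}\vdash\Se$'' while the displayed induction only yields derivability of $\To\texttt{fm}(\Se)$; making this step explicit via a cut against $\Se^\bullet$ is a genuine and worthwhile addition.

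One detail of your part (ii) is wrong as stated, although the sub-goal it targets is still provable. In $\Se^\bullet$ the conjuncts $\E x$ occur in the antecedent of the implication $\bigwedge_{x\in X}\E x\wedge\bigwedge\Gamma\to\bigvee\Delta$, which itself sits in the antecedent of the sequent; after a bottom-up application of $L\to$ they therefore land in the \emph{succedent}, so there is nothing for $L\ex$ or $Repl_X$ to act on and no signature membership to ``justify'' --- the signature $X$ is already present in $\Se^\bullet$ by construction. What you actually need is a derivation of $\Se'\{X;\Gamma\To\Delta,\ex y(y=x),\dots\}$ for each $x\in X$, which you obtain by unfolding $\ex y(y=x)$ as $\neg\fa y\,\neg(y=x)$, applying $L\fa$ with instantiating term $x$ drawn from the signature, and closing with $Ref$ and an initial sequent on $x=x$. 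With that correction, and with the induction in (ii) stated for an arbitrary context $\Se'\{\cdot\}$ so that the disjuncts $\Box\,\texttt{fm}(\Se_k)$ can be pushed into the nestings $[\Se_k]$ by $L\Box$ before invoking the inductive hypothesis, your argument goes through.
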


\begin{proof} We show that $\mathsf{Q.L}\vdash \texttt{fm}(\Se)$ implies $\mathsf{NQ.L}\vdash \Se$; the theorem follows by the completeness of $\mathsf{Q.L}$ (Theorem~\ref{axcomp}). We proceed by induction on the height of the derivation of $\texttt{fm}(\Se)$ in $\mathsf{Q.L}$. 
The $\mathsf{NQ.L}$-admissibility of  rule {\bf MP}/{\bf UG}/{\bf N} is a corollary of Theorem~\ref{cut}/Lemma \ref{weakening}/Lemma \ref{merge}.   We consider only  axioms {\bf UI}$^\circ$ (assuming  $y\not\in A$ for simplicity), \textbf{ND}, and  {\bf CBF}. The cases of axioms {\bf REF} and {\bf REPL} follows from Lemma \ref{axid} and the other cases are similar.\medskip

\noindent\scalebox{0.74}{$$
\infer[\infrule R\fa]{\To \fa y(\fa xA\to A(y/x))}{
\infer[\infrule R\to]{y;\,\To \fa xA\to A(y/x)}{
\infer[\infrule L\fa]{y;\fa xA\To A(y/x)}{
\infer[\infrule L.~\ref{genax}]{y;A(y/x),\fa xA\To A(y/x)}{}}}}
\infer[\infrule R\Box]{x\neq y\To\Box (x\neq y)}{
\infer=[\infrule L\neg+R\neg]{x\neq y\To\, [\,\To x\neq y]}{
\infer[\infrule Rig]{\To x=y,[x=y\To\,]}{
\infer[\infrule L.~\ref{genax}]{x=y\To x=y,[x=y\To\,]}{}}}}\infer[\infrule R\fa]{\Box\fa xA\To\fa x\Box A}{
\infer[\infrule R\Box]{y;\Box\fa xA\To\Box A(y/x)}{
\infer[\infrule L\Box]{y;\Box\fa xA\To\,[\To A(y/x)]}{
\infer[\infrule R_{cbf}]{y;\Box\fa xA\To\,[\fa xA\To A(y/x)]}{
\infer[\infrule L\fa]{y;\Box\fa xA\To\,[y;\fa xA\To A(y/x)]}{
\infer[\infrule L.~\ref{genax}]{y;\Box\fa xA\To\,,[y;A(y/x),\fa xA\To A(y/x)]}{}}}}}}
$$}

\phantom{a}\qed\end{proof}

\section{Conclusion and Future Work}\label{sec:future}

 We provided a uniform nested sequent presentation of quantified modal logics characterised by combinations of fundamental properties. Due to the inclusion of equality in the language of the QMLs considered, our nested calculi permit a formula translation by means of the (definable) existence predicate. As a consequence, our systems possess both a good degree of modularity \emph{and} utilise a language as expressive as that of each logic, 
 yielding more economical systems in contrast to the labelled calculi given for the same QMLs, which employ a more expressive language~\cite{NP11,V00}. Beyond formula interpretability, our nested calculi satisfy fundamental properties such as the admissibility of important structural rules, invertibility of all rules, and syntactic cut-elimination.
 
 In future work, we aim to investigate constructive proofs of interpolation properties with our nested calculi (cf.~\cite{FK15,L20b}), 
 to use (variations of) our nested calculi to identify decidable QML fragments, as well as extend the present approach to QMLs with non-rigid designators and, possibly, definite descriptions based on $\lambda$-abstraction (see \cite{FM98}) as was done in \cite{O21} for labelled sequent calculi. Another open problem is to give nested sequents with a formula interpretation for QMLs where the existence predicate is not expressible; we conjecture that this might be achieved by using the  `universally closed nesting' defined by \mbox{Br\"unner  for 
 free logics   \cite{B10}.}
 
 We also aim to generalise our approach by employing a wider selection of propagation rules~\cite{CCGH97,F72} and reachability rules~\cite{L21,L22} in our systems. As shown in various works~\cite{GPT11,L21}, diverse classes of logics characterised by Horn properties can be supplied cut-free nested calculi by utilising logical rules that propagate or consume data along paths within nested sequents specified by formal grammars. Applying this technique, we plan to see if we can capture a much wider class of QMLs in a uniform and modular fashion, and plan to investigate admissibility and invertibility properties as well as cut-elimination in this more general setting. It would also be worthwhile to examine the relationship between our nested calculi and other calculi for QMLs; e.g., we could study the computational relationship between our nested calculi and the labelled calculi for QMLs, showing how proofs can be translated and determining complexity bounds for the relative sizes of proofs.

%
%
%
%

\appendix

\section{Admissibility of Special Structural Rules}





\begin{lemma}\label{lem:T-admiss} If $\mathsf{NQ.L}$ contains the rule $R_T$ for the propositional axiom ${\bf T}$, then the corresponding structural rule $S_T$ is admissible in $\mathsf{NQ.L}$.
\end{lemma}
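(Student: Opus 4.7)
The plan is to proceed by induction on the height of the derivation $\mathcal{D}$ of the premiss $\Se\{X;\GSD,[Y;\Pi\To\Sigma]\}$, ultimately obtaining a derivation of $\Se\{X,Y;\Pi,\GSD,\Sigma\}$. Both base cases are straightforward: if $\mathcal{D}$ consists solely of an initial sequent or an instance of $L\bot$, then the principal atomic formula (resp.~$\bot$) either sits in $X;\GSD$ (so that after merging it still yields an instance of the same axiom) or it sits in $Y;\Pi\To\Sigma$, where it likewise occurs in the merged parent sequent $X,Y;\Pi,\GSD,\Sigma$.

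For the inductive step I would dispatch the cases according to the last rule applied in $\mathcal{D}$. The bulk of the rules (including $L\To$, $R\To$, $L\fa$, $R_D$, $R_4$, $R_5$, the identity rules, and the domain structural rules applied elsewhere than at the merge boundary) permute routinely: one applies the inductive hypothesis to each premiss and then reapplies the rule to the merged conclusion. For $R\fa$ where the eigenvariable might clash with variables to be merged in, the standard trick of first renaming via the height-preserving admissible substitution rule of Lemma~\ref{subs} restores freshness, after which the inductive hypothesis and a fresh $R\fa$ inference close the case. For $R\Box$ creating a $\Box A$ in $\GSD$ (with the witness child $[\emptyset;\To A]$ sitting beside $[Y;\Pi\To\Sigma]$), I apply the inductive hypothesis to merge $[Y;\Pi\To\Sigma]$ into the parent, leaving $[\emptyset;\To A]$ intact, and then reapply $R\Box$ to recover $\Box A$ in the merged antecedent.

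The genuinely interesting cases, which explain why $R_T$ is needed, are those where the last rule acts across the merge boundary by copying a formula between the parent $X;\GSD$ and the child $[Y;\Pi\To\Sigma]$. The paradigmatic instance is $L\Box$: here the premiss has the shape $\Se\{X;\Box A,\Gamma'\To\Delta,[Y;A,\Pi\To\Sigma]\}$ with $\Gamma=\Box A,\Gamma'$; applying the inductive hypothesis yields $\Se\{X,Y;A,\Pi,\Box A,\Gamma'\To\Delta,\Sigma\}$, and a single application of $R_T$ then absorbs the extra copy of $A$ to produce the desired merged conclusion. The same pattern handles $R_B$ (where $A$ is brought into the parent rather than the child) and $R_4$ (where one uses the admissible rule $CL$ from Lemma~\ref{contr} to remove the duplicate $\Box A$ post-merge). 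For the signature structural rules $R_{cbf}$ and $R_{bf}$ applied across the boundary, the inductive hypothesis produces a sequent whose merged signature contains a duplicate variable, which is removed using the height-preserving admissibility of $SC$ from Lemma~\ref{sig-struc-rules}.

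The principal obstacle will be a clean uniform treatment of the modal and domain rules interacting with the merge boundary---in particular verifying that the copy-style rules ($L\Box$, $R_B$, $R_4$, and the two-hole rules $R_5$ and $R_{5dom}$) can always be dismantled after merging by appeal to $R_T$ together with the admissibility results already established (contraction, signature contraction, substitution, and weakening). Once these interface cases are systematised, the induction closes and $S_T$ is admissible in every properly closed $\mathsf{NQ.L}$ containing $R_T$.
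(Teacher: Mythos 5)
Your overall strategy coincides with the paper's: induction on the height of the derivation of the premiss, with most rules permuting past the merge, $L\Box$ and $R_B$ at the boundary absorbed by a single $R_T$, $R_4$ and $Rig$ at the boundary handled by IH plus the admissible $CL$, and $R_{cbf}$/$R_{bf}$ at the boundary handled by IH plus $SC$. The base cases, the eigenvariable renaming for $R\fa$, and the $R\Box$ case are all as in the paper.

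However, there is a genuine gap in your treatment of the two-hole rules $R_5$ and $R_{5dom}$ at the merge boundary, precisely the cases you flag as ``the principal obstacle'' and then claim to close with $R_T$ together with contraction, signature contraction, substitution, and weakening. That toolkit does not suffice. Consider the configuration where the node being merged into the root is the source of the $R_5$ copy: the premiss is $Z;\Pi_{1}\To\Sigma_{1},[X;\Box A,\GSD],[\Se'\{Y;\Box A,\Pi_{2}\To\Sigma_{2}\}]$ and after applying IH you obtain $Z,X;\Box A,\Pi_{1},\Gamma\To\Delta,\Sigma_{1},[\Se'\{Y;\Box A,\Pi_{2}\To\Sigma_{2}\}]$, from which you must delete the occurrence of $\Box A$ sitting at a node of depth $\geq 1$. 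You cannot reapply $R_5$, because its side condition $Depth(\Se\{\cdot\}\{\emptyset\})\geq 1$ is now violated (the source $\Box A$ has moved to the root), and contraction is useless because the two copies of $\Box A$ live in different nodes. The paper's resolution is to weaken $\Box A$ into every node along the path from the root down to $Y;\Box A,\Pi_{2}\To\Sigma_{2}$ and then apply $R_4$ repeatedly to erase the copies; crucially, $R_4$ is available only because the calculus is assumed \emph{properly closed} (all $\mathsf{Q.L}$-frames with $\mathbf{T},\mathbf{5}$ satisfy $\mathbf{4}$, hence $\mathbf{4}\in\mathsf{L}$). The analogous $R_{5dom}$ case likewise needs $SW$ along the path followed by repeated $R_{bf}$, again licensed only by proper closure. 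Without invoking proper closure to import these extra rules, the induction does not close, so you need to add this ingredient explicitly.
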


\begin{proof} By induction on the height of the given derivation. The base case is trivial, so we focus on the inductive step. We consider the interesting cases of $L\Box$, $Rig$, $R_{bf}$, $R_5$ and $R_{5dom}$ and note that the remaining cases are simpler or similar. The non-trivial $L\Box$ case is shown below left and can be resolved as shown below right by applying the inductive hypothesis and then $R_{T}$. 
\begin{center}
\resizebox{\columnwidth}{!}{
\begin{tabular}{cc c}
\AxiomC{$\Se\{X;\Box A,\GSD,[Y;A,\Pi\To\Sigma]\}$}
\RightLabel{$\infrule L\Box$}
\UnaryInfC{$\Se\{X;\Box A,\GSD,[Y;\Pi\To\Sigma]\}$}
\RightLabel{$\infrule S_{T}$}
\UnaryInfC{$\Se\{X,Y;\Box A,\Pi,\GSD,\Sigma\}$}
\DisplayProof

&$\leadsto$&

\AxiomC{$\Se\{X;\Box A,\GSD,[Y;A,\Pi\To\Sigma]\}$}
\RightLabel{$\infrule IH$}
\UnaryInfC{$\Se\{X;\Box A,A,\Pi,\GSD,\Sigma\}$}
\RightLabel{$\infrule R_{T}$}
\UnaryInfC{$\Se\{X,Y;\Box A,\Pi,\GSD,\Sigma\}$}
\DisplayProof
\end{tabular}
}
\end{center}
 A non-trivial $Rig$ case is shown below:
\begin{center}
\resizebox{\columnwidth}{!}{
\begin{tabular}{ccc}
\AxiomC{$\Se\{X;x=y,\GSD,[Y;x=y,\Pi\To\Sigma]\}$}
\RightLabel{$\infrule Rig$}
\UnaryInfC{$\Se\{X;x=y,\GSD,[Y;\Pi\To\Sigma]\}$}
\RightLabel{$\infrule S_{T}$}
\UnaryInfC{$\Se\{X,Y;x=y,\Pi,\GSD,\Sigma\}$}
\DisplayProof 
&$\leadsto$&\AxiomC{$\Se\{X;x=y,\GSD,[Y;x=y,\Pi\To\Sigma]\}$}
\RightLabel{$\infrule IH$}
\UnaryInfC{$\Se\{X,Y;x=y,x=y,\Pi,\GSD,\Sigma\}$}
\RightLabel{$\infrule CL$}
\UnaryInfC{$\Se\{X,Y;x=y,\Pi,\GSD,\Sigma\}$}
\DisplayProof
\end{tabular} }
\end{center}
The non-trivial $R_{bf}$ case is similar to the $Rig$ case above and is shown below.
\begin{center}
\resizebox{\columnwidth}{!}{
\begin{tabular}{ccc}
\AxiomC{$\Se\{X,x;\GSD,[Y,x;\Pi\To\Sigma]\}$}
\RightLabel{$\infrule R_{bf}$}
\UnaryInfC{$\Se\{X;\GSD,[Y,x;\Pi\To\Sigma]\}$}
\RightLabel{$\infrule S_{T}$}
\UnaryInfC{$\Se\{X,Y,x;\Pi,\GSD,\Sigma\}$}
\DisplayProof

&$\leadsto$&

\AxiomC{$\Se\{X,x;\GSD,[Y,x;\Pi\To\Sigma]\}$}
\RightLabel{$\infrule IH$}
\UnaryInfC{$\Se\{X,Y,x,x;\Pi,\GSD,\Sigma\}$}
\RightLabel{$\infrule SC$}
\UnaryInfC{$\Se\{X,Y,x;\Pi,\GSD,\Sigma\}$}
\DisplayProof
\end{tabular}
}
\end{center}
We consider an interesting $R_5$ case:
\begin{center}
\AxiomC{$Z;\Pi_{1}\To\Sigma_{1},[X;\Box A,\GSD],[\Se\{Y;\Box A,\Pi_{2}\To\Sigma_{2}\}]$}
\RightLabel{$\infrule R_5$}
\UnaryInfC{$Z;\Pi_{1}\To\Sigma_{1},[X;\Box A,\GSD],[\Se\{Y;\Pi_{2}\To\Sigma_{2}\}]$}
\RightLabel{$\infrule S_{T}$}
\UnaryInfC{$Z,X;\Box A,\Pi_{1},\Gamma\To\Delta,\Sigma_{1},[\Se\{Y;\Pi_{2}\To\Sigma_{2}\}]$}
\DisplayProof
\end{center}
The above case can be resolved as shown below by first applying the inductive hypothesis to the derivation of the premiss of $R_5$. After, we apply IW to weaken in $\Box A$ along the path from the root to $Y;\Box A, \Pi_{2}\To\Sigma_{1}$ (say, $n-1$ times), and then successively apply $R_4$ ($n$ times) to obtain the desired conclusion. Note that $R_4$ is applicable since we have assumed all of our nested calculi properly closed, and $R_4$ must be a rule if both $R_T$ and $R_5$ are assumed to be rules of our calculus.
\begin{center}
\AxiomC{$Z;\Pi_{1}\To\Sigma_{1},[X;\Box A,\GSD],[\Se\{Y;\Box A,\Pi_{2}\To\Sigma_{2}\}]$}
\RightLabel{$\infrule IH$}
\UnaryInfC{$Z,X;\Box A,\Pi_{1},\Gamma\To\Delta,\Sigma_{1},[\Se\{Y;\Box A, \Pi_{2}\To\Sigma_{2}\}]$}
\RightLabel{$\infrule{ IW\, \text{ ($n-1$ times)}}$}
\UnaryInfC{$Z,X;\Box A,\Pi_{1},\Gamma\To\Delta,\Sigma_{1},[\Se\{Y;\Box A, \Pi_{2}\To\Sigma_{2}\}]$}
\RightLabel{$\infrule{ R_4\, \text{ ($n$ times)}}$}
\UnaryInfC{$Z,X;\Box A,\Pi_{1},\Gamma\To\Delta,\Sigma_{1},[\Se\{Y;\Pi_{2}\To\Sigma_{2}\}]$}
\DisplayProof
\end{center}
Last, we consider an interesting $R_{5dom}$ case:
\begin{center}
\AxiomC{$Z;\Pi_{1}\To\Sigma_{1},[X,x;\GSD],[\Se\{Y,x;\Pi_{2}\To\Sigma_{2}\}]$}
\RightLabel{$\infrule R_{5dom}$}
\UnaryInfC{$Z;\Pi_{1}\To\Sigma_{1},[X;\GSD],[\Se\{Y,x;\Pi_{2}\To\Sigma_{2}\}]$}
\RightLabel{$\infrule S_{T}$}
\UnaryInfC{$Z,X;\Pi_{1},\Gamma\To\Delta,\Sigma_{1},[\Se\{Y,x;\Pi_{2}\To\Sigma_{2}\}]$}
\DisplayProof
\end{center}
 We apply the inductive hypothesis, followed by the height-preserving admissible rule SW, which we apply $n-1$ times adding the variable $x$ along the path from the root to $Y,x;\Pi_{2}\To\Sigma_{2}$. Finally, we apply the $R_{bf}$ rule $n$ times to delete the $n-1$ copies of $x$ to the root, thus giving the desired conclusion. We note that we may apply $R_{bf}$ as our nested calculi are properly closed, meaning that ${\bf BF} \in \mathsf{L}$ since this case assumes that ${\bf T}, {\bf 5}, {\bf CBF} \in \mathsf{L}$.
\begin{center}
\AxiomC{$Z;\Pi_{1}\To\Sigma_{1},[X,x;\GSD],[\Se\{Y,x;\Pi_{2}\To\Sigma_{1}\}]$}
\RightLabel{$\infrule IH$}
\UnaryInfC{$Z,X,x;\Pi_{1},\Gamma\To\Delta,\Sigma_{1},[\Se\{Y,x; \Pi_{2}\To\Sigma_{2}\}]$}
\RightLabel{$\infrule{ SW\, \text{ ($n-1$ times)}}$}
\UnaryInfC{$Z,X,x;\Pi_{1},\Gamma\To\Delta,\Sigma_{1},[\Se\{Y,x; \Pi_{2}\To\Sigma_{2}\}]$}
\RightLabel{$\infrule{ R_{bf}\, \text{ ($n$ times)}}$}
\UnaryInfC{$Z,X;\Pi_{1},\Gamma\To\Delta,\Sigma_{1},[\Se\{Y,x;\Pi_{2}\To\Sigma_{2}\}]$}
\DisplayProof
\end{center}
\qed\end{proof}

\begin{lemma}\label{lem:4-admiss} If $\mathsf{NQ.L}$ contains the rule $R_4$ for the propositional axiom ${\bf 4}$, then the corresponding structural rule $S_4$ is admissible in $\mathsf{NQ.L}$.
\end{lemma}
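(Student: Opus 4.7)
The plan is to proceed by induction on the height of the given derivation of the premiss of $S_4$, closely paralleling Lemma~\ref{lem:T-admiss} but with $R_4$ playing the role of $R_T$ and one extra layer of nesting in the target. The base case is immediate: if the premiss is an initial sequent or an instance of $L\bot$, then the principal atom or $\bot$ sits either in $X;\GSD$ or strictly inside $[Y;\Pi\To\Sigma]$, and both positions are preserved when the empty intermediate box is inserted.

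For the inductive step, any rule whose action lies entirely above, below, or parallel to the edge from $X;\GSD$ to $[Y;\Pi\To\Sigma]$ being split by $S_4$ is handled by applying the IH to every premiss and then re-applying the rule unchanged. The nontrivial cases are precisely those rules that carry information across that edge, namely $L\Box$, $Rig$, $R_4$, $R_5$, $R_{bf}$, $R_{cbf}$, and $R_{5dom}$.

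The archetypal case is $L\Box$ with principal $\Box A$ in $X;\GSD$ and auxiliary $A$ inserted inside $[Y;\Pi\To\Sigma]$. Applying the IH to the premiss of $L\Box$ yields $\Se\{X;\Box A,\GSD,[\emptyset;\,\To\,,[Y;A,\Pi\To\Sigma]\,]\}$, and from this I derive the target $\Se\{X;\Box A,\GSD,[\emptyset;\,\To\,,[Y;\Pi\To\Sigma]\,]\}$ in three further steps: first I use IW (Lemma~\ref{weakening}) to put $\Box A$ in the antecedent of the inserted empty box; then I apply $L\Box$ with that middle $\Box A$ principal, which discharges the $A$ in the innermost component; and finally I apply $R_4$ with the root $\Box A$ principal, which discharges $\Box A$ from the middle. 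The $R_4$ case itself, where the last step copies $\Box A$ from $X;\GSD$ directly into $[Y;\Pi\To\Sigma]$, follows the same template with $R_4$ twice in place of $L\Box$ and $R_4$. The cases of $Rig$, $R_{bf}$, $R_{cbf}$, and $R_{5dom}$ are handled analogously: after the IH, the relevant identity or signature variable is transported through the newly inserted empty box using the corresponding structural rule, appealing to the properly closed assumption to guarantee that the needed rule is present in $\mathsf{NQ.L}$.

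The hardest part will be the $R_5$ case: since $R_5$ permits unrestricted-depth propagation subject to the side condition $Depth(\Se\{\cdot\}\{\emptyset\}) \geq 1$, I must verify that inserting an empty box along the path to $[Y;\Pi\To\Sigma]$ never invalidates this side condition and that the IH still applies to the $R_5$ premiss in its new form. Once this bookkeeping is settled, applying the IH and then re-applying $R_5$ directly closes the case, with proper closedness of $\mathsf{L}$ (which forces the compatible signature rules such as $R_{cbf}$ or $R_{bf}$ to coexist whenever $\mathbf{B}$ or $\mathbf{5}$ is present) ensuring that all auxiliary rules used in the other cases remain available, so the case analysis closes uniformly.
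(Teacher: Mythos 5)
Your proposal is correct and follows essentially the same route as the paper: induction on the height of the derivation, where the cases crossing the split edge are resolved by applying the inductive hypothesis and then transporting the relevant formula or signature variable through the inserted empty component via IW/SW followed by two applications of the corresponding rule. The paper explicitly displays exactly the $R_4$ case (IH, then IW, then $R_4$ twice) and the $R_{cbf}$ case (IH, then SW, then $R_{cbf}$ twice), which match your template, and dismisses the remaining cases as straightforward or similar just as you do.
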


\begin{proof} By induction on the height of the given derivation. As the base case and most cases of the inductive step are trivial, we only consider the interesting $R_4$ and $R_{cbf}$ cases, noting that all others are straightforward or similar. We first show how to resolve the interesting $R_4$ case below:
\begin{center}
\resizebox{\columnwidth}{!}{
\begin{tabular}{ccc}
\AxiomC{$\Se\{X; \Box A, \GSD, [Y; \Box A, \Pi \To \Sigma]\}$}
\RightLabel{$\infrule{R_4}$}
\UnaryInfC{$\Se\{X; \Box A, \GSD, [Y; \Pi \To \Sigma]\}$}
\RightLabel{$\infrule{S_4}$}
\UnaryInfC{$\Se\{X; \Box A, \GSD, [\emptyset; \, \To \, [Y; \Pi \To \Sigma]]\}$}
\DisplayProof
&$\leadsto$&
\AxiomC{$\Se\{X; \Box A, \GSD, [Y; \Box A, \Pi \To \Sigma]\}$}
\RightLabel{$\infrule{IH}$}
\UnaryInfC{$\Se\{X; \Box A, \GSD, [\emptyset; \, \To \, [Y; \Box A, \Pi \To \Sigma]]\}$}
\RightLabel{$\infrule{IW}$}
\UnaryInfC{$\Se\{X; \Box A, \GSD, [\emptyset; \Box A \To \, [Y; \Box A, \Pi \To \Sigma]]\}$}
\RightLabel{$\infrule{R_4}$}
\UnaryInfC{$\Se\{X; \Box A, \GSD, [\emptyset; \Box A \To \, [Y; \Pi \To \Sigma]]\}$}
\RightLabel{$\infrule{R_4}$}
\UnaryInfC{$\Se\{X; \Box A, \GSD, [\emptyset; \, \To \, [Y; \Pi \To \Sigma]]\}$}
\DisplayProof
\end{tabular}}
\end{center}
 The non-trivial $R_{cbf}$ case is shown below:
\begin{center}
\AxiomC{$\Se\{X,x; \GSD, [Y,x; \Box A, \Pi \To \Sigma]\}$}
\RightLabel{$\infrule{R_{cbf}}$}
\UnaryInfC{$\Se\{X,x; \GSD, [Y; \Box A, \Pi \To \Sigma]\}$}
\RightLabel{$\infrule{S_4}$}
\UnaryInfC{$\Se\{X,x; \GSD, [\emptyset; \, \To \, [Y; \Box A, \Pi \To \Sigma]]\}$}
\DisplayProof
\end{center}
 We resolve the case by applying IH, SW, and then two instances of $R_{cbf}$.
\begin{center}
\AxiomC{$\Se\{X,x; \GSD, [Y,x; \Box A, \Pi \To \Sigma]\}$}
\RightLabel{$\infrule{IH}$}
\UnaryInfC{$\Se\{X,x; \GSD, [\emptyset; \, \To \, [Y,x; \Box A, \Pi \To \Sigma]]\}$}
\RightLabel{$\infrule{SW}$}
\UnaryInfC{$\Se\{X,x; \GSD, [x; \, \To \, [Y,x; \Box A, \Pi \To \Sigma]]\}$}
\RightLabel{$\infrule{R_{cbf}\, \text{ (2 times)}}$}
\UnaryInfC{$\Se\{X,x; \GSD, [\emptyset; \, \To \, [Y; \Box A, \Pi \To \Sigma]]\}$}
\DisplayProof
\end{center}
\qed\end{proof}

\begin{lemma}\label{lem:L-Str-admiss} The rule $\mathsf{L}$-Str is admissible in $\mathsf{NQ.L}$.
\end{lemma}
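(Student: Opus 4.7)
The plan is to prove the admissibility of $\mathsf{L}$-Str by case analysis on the four clauses of its side condition, reducing each case to the admissibility of the special structural rules established in Lemma~\ref{specialstruct} together with the admissibility of Merge (Lemma~\ref{merge}).

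If $\mathbf{4}, \mathbf{5} \not\in \mathsf{L}$, the side condition forces both holes to sit in the same sequent-node; since bracketed children of a given node form a multiset, plugging $[X;\GSD]$ at either hole yields the same nested sequent, so premiss and conclusion are syntactically identical. If $\mathbf{5} \in \mathsf{L}$ and $\mathbf{4} \not\in \mathsf{L}$, the side condition $Depth(\Se\{\cdot\}\{\emptyset\}) \geq 1$ and the schema of $\mathsf{L}$-Str both coincide with those of $S_5$, and admissibility is immediate. If $\mathbf{4}, \mathbf{5} \in \mathsf{L}$, there is no restriction: when the first hole has depth $\geq 1$ we again reduce to $S_5$, and when it sits at the root we invoke the argument for the remaining case below.

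The interesting case is $\mathbf{4} \in \mathsf{L}$, $\mathbf{5} \not\in \mathsf{L}$, where the context has the shape $\Se'\{X; \GSD, \{\cdot\}, [\Se_1\{\cdot\}]\}$ with the second hole at some depth $d \geq 0$ within $\Se_1\{\cdot\}$. I would argue by induction on $d$. For $d = 0$, apply $S_4$ to rewrite $[X;\GSD]$ as $[\emptyset; \To \, [X;\GSD]]$ and then apply Merge to fuse this empty sibling with $[\Se_1\{Y_2;\Pi_2\To\Sigma_2\}]$, which places $[X;\GSD]$ as a child of $\Se_1$'s root, exactly as required. For $d > 0$, a single $S_4$/Merge step pushes $[X;\GSD]$ inside the root of $\Se_1$, strictly decreasing its distance to the second hole; applying the inductive hypothesis to the resulting nested sequent (whose first-hole position now sits one level deeper) then completes the move.

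The main obstacle is this inductive case: propagating the $S_4$/Merge manoeuvre down a path of unbounded depth inside $\Se_1$ while making sure the added empty intermediate fuses cleanly with the existing tree structure, without disturbing sibling subtrees or their signatures. The proper-closure assumption on $\mathsf{NQ.L}$ is essential throughout, as it guarantees that whichever of $S_4$, Merge, or $S_5$ we appeal to is in fact available in the calculus under consideration.
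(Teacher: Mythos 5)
Your reduction strategy works for two of the four cases (when $\mathbf{4},\mathbf{5}\notin\mathsf{L}$ the premiss and conclusion indeed coincide, and the $S_4$-plus-Merge descent handles the $\mathbf{4}$-only case, where the second hole lies in the subtree below the first), but it collapses into circularity exactly where the lemma has content: the cases with $\mathbf{5}\in\mathsf{L}$. Under the side condition $Depth(\Se\{\cdot\}\{\emptyset\})\geq 1$, the rule $\mathsf{L}$-Str \emph{is} the rule $S_5$ --- same schema, same side condition --- so saying that this case "coincides with $S_5$, and admissibility is immediate" is just restating the goal. Worse, in this paper the dependency runs the other way: Lemma~\ref{lem:5-admiss} proves $S_5$ admissible \emph{as an instance of} $\mathsf{L}$-Str, and the proof of Lemma~\ref{specialstruct} defers to the appendix lemmas, including the very lemma you are trying to prove. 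So your appeal to Lemma~\ref{specialstruct} for $S_5$ gives no argument at all for the $\mathbf{5}$-cases (including the subcase of $\mathbf{4},\mathbf{5}\in\mathsf{L}$ where the first hole is not at the root, which you also defer to $S_5$). Note also that when $\mathbf{5}\in\mathsf{L}$ the two holes may sit in incomparable positions of the tree, so no descent along a single path via $S_4$/Merge can rescue the argument.

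The paper proves the lemma directly, by induction on the height of the derivation of the premiss, permuting $\mathsf{L}$-Str above each rule of $\mathsf{NQ.L}$; the delicate cases (e.g.\ $R_{cbf}$ when $\mathbf{5}\in\mathsf{L}$, $\mathbf{4}\notin\mathsf{L}$) are closed using proper closure, which guarantees that auxiliary rules such as $R_{5dom}$ are present in the calculus. To repair your proof you would need to supply such a height-of-derivation induction (or some other independent argument) for the $\mathbf{5}$-cases rather than citing $S_5$; at that point you would essentially be reproducing the paper's proof, and the $S_4$/Merge reduction would only be an alternative treatment of the $\mathbf{4}$-only case.
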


\begin{proof} By induction on the height of the given derivation. The base case is trivial. We show the $R_{cbf}$ case of the inductive step under the assumption that $\mathbf{5} \in \mathsf{L}$ and $\mathbf{4} \not\in \mathsf{L}$. The remaining cases are straightforward or similar.
\begin{center}
\AxiomC{$\Se\{X_{1},x; \Gamma_{1} \Rightarrow \Delta_{1}, [Y,x; \Pi \To \Sigma]\}\{X_{2}; \Gamma_{2} \Rightarrow \Delta_{2}\}$}
\RightLabel{$\infrule{R_{cbf}}$}
\UnaryInfC{$\Se\{X_{1},x; \Gamma_{1} \Rightarrow \Delta_{1}, [Y; \Pi \To \Sigma]\}\{X_{2}; \Gamma_{2} \Rightarrow \Delta_{2}\}$}
\RightLabel{$\infrule{\mathsf{L}\text{-Str}}$}
\UnaryInfC{$\Se\{X_{1},x; \Gamma_{1} \Rightarrow \Delta_{1}\}\{X_{2}; \Gamma_{2} \Rightarrow \Delta_{2},[Y; \Pi \To \Sigma]\}$}
\DisplayProof
\end{center}
 Since $\mathsf{NQ.L}$ is assumed to be properly closed, we know that $R_{5dom}$ is a rule in the calculus as $\mathbf{5}, \mathbf{CBF} \in \mathsf{L}$. 
 The case above can be resolved by applying the inductive hypothesis followed by $R_{5dom}$. 
\qed\end{proof}

\begin{lemma}\label{lem:5-admiss} If $\mathsf{NQ.L}$ contains the rule $R_5$ for the propositional axiom ${\bf 5}$, then the corresponding structural rule $S_5$ is admissible in $\mathsf{NQ.L}$.
\end{lemma}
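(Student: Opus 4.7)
The plan is to proceed by induction on the height of the derivation of the premiss of $S_5$, paralleling the proofs of Lemmas \ref{lem:T-admiss} and \ref{lem:4-admiss}. The base case is immediate: if the premiss is an initial sequent or an instance of $L\bot$, then the principal atomic formula (or $\bot$) is preserved under relocation of the block $[X;\GSD]$, so the conclusion is also an initial sequent or an instance of $L\bot$.

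For the inductive step, the bulk of the cases commute trivially: when the last rule $R$ does not interact with the moved block or its target position, I would apply IH to each premiss and reapply $R$. The genuinely interesting cases arise when the last rule is $R_5$, $R_{5dom}$, $R\Box$, $R\fa$, or one of the signature/identity rules acting at the two hole positions. For $R_5$ and $R_{5dom}$, when the principal material interacts with the move, the strategy is to apply IH and reapply the rule at the relocated position, invoking $R_{cbf}$ or $R_{bf}$ as needed to rebalance signatures; properly-closedness of $\mathsf{NQ.L}$ guarantees these auxiliary rules are available. For $R\Box$, apply IH and reapply $R\Box$. For $R\fa$, first rename the eigenvariable via the height-preserving admissible substitution of Lemma \ref{subs}, then proceed by IH and $R\fa$.

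A much shorter route is available: observe that $S_5$ is literally an instance of $\mathsf{L}$-Str restricted to contexts with $Depth(\Se\{\cdot\}\{\emptyset\}) \geq 1$. Under our assumption that $\mathbf{5} \in \mathsf{L}$ (i.e., $R_5 \in \mathsf{NQ.L}$), the side condition on $\mathsf{L}$-Str is either exactly this depth condition (when $\mathbf{4} \not\in \mathsf{L}$) or no restriction at all (when $\mathbf{4} \in \mathsf{L}$). In either case every instance of $S_5$ is an instance of $\mathsf{L}$-Str, and the admissibility of $S_5$ follows immediately from Lemma \ref{lem:L-Str-admiss}.

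The main obstacle in the direct induction would be the $R_{5dom}$ case, where the moved block interacts with a rule that itself shifts signatures across non-adjacent nested positions, requiring careful use of the height-preserving admissible rule $SW$ together with $R_{cbf}$ or $R_{bf}$ to realign the $n$ copies of the variable propagated along the path from the root---analogous to the $R_{5dom}$ case already treated for $S_B$ in the proof of Lemma \ref{specialstruct}. I would therefore favor presenting the reduction to $\mathsf{L}$-Str, noting that $S_5$ is a special case of a rule already shown admissible.
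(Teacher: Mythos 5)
Your preferred route—observing that every instance of $S_5$ satisfies the side condition of $\mathsf{L}$-Str when $\mathbf{5}\in\mathsf{L}$ and invoking Lemma~\ref{lem:L-Str-admiss}—is exactly the paper's proof, which states that the result is immediate since $S_5$ is an instance of $\mathsf{L}$-Str. Your additional sketch of a direct induction is unnecessary but not incorrect; the reduction you settle on is the right (and the paper's) argument.
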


\begin{proof} Immediate from Lemma~\ref{lem:L-Str-admiss} as $S_5$ is an instance of $\mathsf{L}$-Str if ${\bf 5} \in \mathsf{L}$.
\qed\end{proof}

\end{document}